\newcommand{\E}{\mathbb{E}}
\newcommand{\ad}{\operatorname{ad}}
\newcommand{\Z}{\mathbb{Z}}
\newcommand{\SL}{\operatorname{SL}}
\newcommand{\diag}{\operatorname{diag}}
\newcommand{\GL}{\operatorname{GL}}
\newtheorem{theorem}{Theorem}[section]
\newtheorem*{abeltheorem}{Theorem~\ref{abel}}
\newtheorem*{rk2theorem}{Theorem~\ref{T-rk2}}
\newtheorem{proposition}[theorem]{Proposition}
\newtheorem{lemma}[theorem]{Lemma}
\newtheorem{cor}[theorem]{Corollary}
\theoremstyle{definition}
\newtheorem{definition}[theorem]{Definition}
\def\diag{\operatorname{diag}}
\def\ad{\operatorname{ad}}
\def\Sp{\operatorname{Sp}}
\def\Aut{\operatorname{Aut}}
\def\Hom{\operatorname{Hom}}
\def\Spec{\operatorname{Spec}}
\def\frak g{{\frak g}}
\renewcommand\a{\alpha}                
\renewcommand\b{\beta}
\def\la{\langle}
\def\ra{\rangle}
\newcommand{\N}{{\mathbb N}}
\def\Q{{\mathbb C}}
\def\Q{{\mathbb Q}}
\def\R{{\mathbb R}}
\def\Z{{\mathbb Z}}
\def\Ad{\text{Ad}}
\def\frak{\mathfrak}
\def\la{\langle}
\def\ra{\rangle}
\DeclareMathOperator{\re}{re}
\DeclareMathOperator{\im}{im}
\DeclareMathOperator{\wts}{wts}
\DeclareMathOperator{\End}{End}
\DeclareMathOperator{\rank}{rank}
\begin{document}

\title{\bf{Strong integrality of inversion subgroups of \\Kac--Moody groups}}

\date{\today}

\author[A. Ali ]{Abid Ali}
\address{Department of Mathematics, Rutgers University, Piscataway, NJ 08854-8019, U.S.A.}
\email{abid.ali@rutgers.edu}

\author[L. Carbone]{Lisa Carbone}
\address{Department of Mathematics, Rutgers University, Piscataway, NJ 08854-8019, U.S.A.}
\email{carbonel@math.rutgers.edu}

\author[D. Liu]{Dongwen Liu}
\address{School of Mathematical Sciences, Zhejiang University, Hangzhou 310027, P.R. China}\email{maliu@zju.edu.cn}

\author[Scott H.\ Murray]{Scott H.\ Murray}
\address{Department of Mathematics, Rutgers University, Piscataway, NJ 08854-8019, U.S.A.}
\email{scotthmurray@gmail.com}

\thanks{2000 Mathematics subject classification. Primary 20G44, 81R10; Secondary 22F50, 17B67}

\begin{abstract}  Let $A$ be a symmetrizable generalized Cartan matrix with corresponding  Kac--Moody algebra $\frak{g}$ over $\Q$. Let $V=V^{\lambda}$ be an integrable highest weight $\frak{g}$-module and let $V_\Z=V^{\lambda}_\Z$ be a $\Z$-form of $V$. Let  $G$ be an associated minimal representation-theoretic Kac--Moody group
and let $G(\Z)$ be its integral subgroup. Let $\Gamma(\Z)$ be the Chevalley subgroup of $G$, that is, the subgroup
 that stabilizes the lattice $V_{\Z}$ in $V$.  For a subgroup $M$ of  $G$, we say that $M$ is integral if  $M\cap G(\Z)=M\cap \Gamma(\Z)$ and that $M$ is strongly integral if there exists $v\in V^{\lambda}_\Z$ such that, for all $g\in M$,  $g\cdot v\in V_{\mathbb{Z}}$ implies $g\in G({\mathbb{Z}})$. We prove strong integrality of  inversion subgroups $U_{(w)}$ of $G$ where, for $w\in W$, $U_{(w)}$ is the group generated by positive real root groups that are flipped to negative roots by $w^{-1}$. We use this to prove strong integrality of  subgroups of the unipotent subgroup $U$ of $G$  generated by commuting real root groups.  When $A$ has rank 2, this gives strong integrality of subgroups $U_1$ and $U_2$ where $U=U_{1}{\Large{*}}\  U_{2}$  and each $U_{i}$ is generated by `half' the positive real roots.
\end{abstract}

\thanks{The second author's research is partially supported by the Simons Foundation, Mathematics and Physical Sciences-Collaboration Grants for Mathematicians, Award Number 422182.  The third author's research is partially supported by NSF of Zhejiang Province LZ22A010006 and NSFC 12171421.}

\maketitle

\section{Introduction}\label{S-intro}
This paper concerns the generalization to infinite-dimensional symmetrizable Kac--Moody algebras of the construction of  Chevalley groups associated to finite-dimensional semi\-simple Lie algebras.

Let $A$ be a symmetrizable generalized Cartan matrix. Let $\frak{g}$ be a Kac--Moody algebra over $\Q$ associated with $A$. We denote simple roots by $\a_1,\dots ,\a_{\ell}$, the set of roots by $\Delta$, the set of positive (respectively negative) roots by $\Delta_{\pm}$ , 
and the set of real roots by $\Delta^{\re}$. If $A$ is not of finite type, then $\frak{g}$ is infinite dimensional. 
Let $V=V^{\lambda}$ be an integrable highest-weight $\frak{g}$-module  with highest weight $\lambda$, highest weight vector $v_\lambda$, and defining homomorphism $\rho:\frak g\to\End(V^{\lambda})$.
Let $\chi_{\pm\alpha_i}(t)\in\Aut(V)$ denote $\exp(t\rho(e_{i}))$  (resp.\ $\exp(t\rho(f_{i}))$),  for each $i\in\{1,\dots ,\ell\}$, where $t\in\Q$ and $e_{i}$ (resp.\ $f_i$) is the Chevalley--Serre generator corresponding to the simple root $\pm\a_i$.
A (split) \emph{minimal representation-theoretic Kac--Moody group} is 
$$G(\Q)=G^\lambda(\Q)=\langle \chi_{\alpha_i}(t),\ \chi_{-\alpha_i}(t)\mid i\in\{1,\dots ,\ell\},\ t\in\Q\rangle.$$

We let  ${\mathcal U}_{{\mathbb{Z}}}\subseteq {\mathcal U}$ denote the $\Z$-form of the universal enveloping algebra of $\mathfrak{g}$ (\cite{Ti87}; see also Section~\ref{S-KMring} below).  
As in \cite{CG}, we set $V_{\mathbb{Z}} = \mathcal{U}_{\mathbb{Z}}\cdot v_{\lambda}.$
The subgroup of $G(\Q)$ generated over $\Z$ is
\begin{align*}
G(\Z)&=\langle\chi_{\alpha_i}(t),\ \chi_{-\alpha_i}(t)\mid i\in\{1,\dots ,\ell\},\ t\in\Z\rangle 
\end{align*}
and the \emph{Chevalley subgroup} of $G(\Q)$ is the subgroup the stabilizes the integral lattice $V_\Z$:
 $$\Gamma(\Z)=\{g \in G(\Q) \mid g\cdot V_\Z= V_\Z\}.$$ 
 This work concerns the question of {\it integrality} of $G(\Q)$. By this we mean: 
 $$\text{`Is $G(\Z)=\Gamma(\Z)$?'}$$
 It is easy to show that $G(\Z)\subseteq \Gamma(\Z)$. Thus it remains to determine if $\Gamma(\Z)\subseteq G(\Z)$.

The answer to this question has not yet been established for any family of   Kac--Moody groups. Here we consider integrality of certain subgroups of $G(\Q)$. We call a subgroup $M$ of $G(\Q)$  \emph{integral} if $M\cap G(\Z)= M\cap \Gamma(\Z)$.
This is equivalent to showing that for all $g\in M$, $g\cdot V_\Z = V_\Z$ implies that $g\in G(\Z)$.
Note that $M_1\subseteq M_2$ and $M_2$ integral implies that $M_1$ is integral.
However  $M_1$ and $M_2$ integral \emph{does not} imply that the subgroup $\langle M_1,M_2\rangle$ generated by $M_1$ and $M_2$ is integral.

In the finite dimensional case, the property of integrality for semi-simple algebraic groups $G(\Q)$, with root system $\Delta$ and Lie algebra $\frak g$, was established in \cite{Chev1995}. Chevalley constructed  (what was his notion of) an affine group scheme associated to $G(\Q)$ and $V_\Z$ and showed that its coordinate algebra  is generated over $\Z$. This was a starting point for  Grothendieck and Demazure who organized the SGA3 seminar (\cite{SGA3}) where they developed the theory of  split reductive group schemes over arbitrary base schemes. Demazure and Grothendieck's  theory assumed known the classification  of reductive groups over an algebraically closed field due primarily to Chevalley, which also holds over $\Z$.

Let $L$ be a lattice satisfying $Q\subseteq L\subseteq P$ where  $Q$ is the root lattice and $P$ is the weight lattice of $\frak g$. 
Chevalley showed that ($\Delta$, $L$) defines an affine group scheme $G_\Z$ over $\Z$ such that
$$\Gamma(\Z)=G_\Z(\Z)= \Hom(\Spec(\Z),G_{\Z})$$
is its set of integral points (\cite{Chev1995}). The group $G_{\Z}(\Q)\cong G(\Q)$ is now known as a Chevalley group. If $L=Q$, then $G_{\Z}(\Q)$ is called {\it adjoint} and if $L=P$, then $G_\Z(\Q)$ is called {\it simply connected}.  It follows that the group $\Gamma(\Z)$ is  generated by the automorphisms 
$$\{\chi_{\a_i}(t), \chi_{-\a_i}(t)\mid \ t\in\Z\},$$
so if $G_\Z(\Q)$ is simply connected, it is integral.

To our knowledge, there is no proof in the literature that $G(\Z)=\Gamma(\Z)$ for  semi-simple linear algebraic groups $G(\Q)$ that doesn't use the language of group schemes. In certain cases, it is straightforward to verify integrality using properties of the linear algebraic group and underlying representation. For example, it is easy to see that $\SL_n(\Z)$ is the stabilizer of the standard lattice $\Z^n$ in $\Q^n$. See also \cite{Soule2007}, where Soul\'e showed that the  group $\E_7(\Z)$  coincides with  $\E_7(\R)\cap \Sp_{56}(\Z)$ where $\E_7(\R)$ is the non-compact real form and $\Sp_{56}(\Z)$ is the stabilizer of the standard lattice (and the canonical symplectic form) in the fundamental representation of dimension 56  of the Lie algebra $\frak e_7$.


The methods used to prove integrality in the finite dimensional case do not extend to Kac--Moody groups $G(\Q)$. Here we address the question of integrality of subgroups of $G(\Q)$ using only the definition of $G(\Q)$ acting on the integrable highest weight module $V$. We introduce the following slight modification of the notion of integrality.

\begin{definition}
Let  $M$ be a subgroup of the Kac--Moody group $G(\Q)$ and let $v_0\in V_\Z$. We say that $M$ is \emph{strongly integral with respect to $v_0\in V_\Z$} if, for all $g\in M$,  $g\cdot v_0\in V_{\mathbb{Z}}$ implies that $g\in G({\mathbb{Z}})$.
\end{definition}
This is stronger than integrality since it only requires us to test the action on a single vector $v_0$, rather than on the entire set~$V_{\mathbb{Z}}$. It is easy to see that strong integrality implies integrality (Lemma~\ref{Strong}).

The (positive) unipotent subgroup of $G(\Q)$ is
 $$U=U(\Q)=\langle \chi_{\alpha_i}(t)\mid i\in\{1,\dots ,\ell\},\ t\in\Q\rangle.$$
Unipotent subgroups play an important role in the study of the structure and representation theory of  Kac--Moody groups. They are constituents of group decompositions such as Iwasawa and Birkhoff decompositions, which provide 
important tools  for studying Kac--Moody groups and their applications.

In this paper, as a first step towards proving integrality of $U$, we prove strong integrality of inversion subgroups of $U$, which are defined as follows.  For $w$ in the Weyl group $W$, the {\it inversion subgroup} $U_{(w)}$ is given as 
 $$U_{(w)}=\langle U_{\beta}\mid \beta\in \Phi_{(w)}\rangle,$$ 
 where
$
 \Phi_{(w)}=\{\beta\in \Delta_{+}\mid w^{-1}\beta\in \Delta_{-}\}
$ and $U_\beta=\{ \chi_\beta(t)\mid t\in\Q\}$ for $\chi_\beta(t)$ as in Subsection~\ref{generators}.

We write $\widetilde{w}$ for the usual lift of $w$ to $G(\Q)$  (see Subsection~\ref{generators}).
Our main result is the following (Section~\ref{S-proof}).
\begin{theorem} (Strong integrality of inversion subgroups)\label{T-main}
Let $G(\Q)$ be as above. Let $U$ be the positive unipotent subgroup of $G(\Q)$. For $w\in W$,
$U_{(w)}$ is strongly integral with respect to $\widetilde{w}v_{\lambda}$.
\end{theorem}

If the matrix $A$ is of finite type (see Subsection~\ref{GCM}), then the group
$$G(\Q)=G^\lambda(\Q)=\langle \chi_{\alpha_i}(t),\ \chi_{-\alpha_i}(t)\mid i\in\{1,\dots ,\ell\},\ t\in\Q\rangle$$ is a semi-simple linear algebraic  group. Furthermore, if the set of weights of $V=V^\lambda$ contains all the fundamental weights, then $G(\Q)$ coincides with the  simply connected Chevalley group $G_\Z(\Q)$ of type $A$.

Our methods give a proof of integrality of unipotent subgroups of simply connected Chevalley groups $G_\Z(\Q)\cong G(\Q)$, using only the action of $G(\Q)$ on $V$. As a corollary of Theorem~\ref{T-main}, we have the following (Subsection~\ref{SS0-findim}).

 \begin{cor}\label{findimcor}
Suppose that the matrix $A$ has finite type. Suppose also that the set of weights of $V$ contains all the fundamental weights. Then  the positive unipotent subgroup $U$ of $G(\Q)$ is integral.
\end{cor}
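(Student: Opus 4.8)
The plan is to observe that, in the finite-type case, the corollary is an immediate consequence of Theorem~\ref{T-main} applied to the longest element of the Weyl group. Since $A$ has finite type, $W$ is finite and contains a unique element $w_0$ of maximal length, characterized by $w_0(\Delta_+)=\Delta_-$. As $w_0^{-1}$ is again a longest element, it too satisfies $w_0^{-1}(\Delta_+)=\Delta_-$, and therefore
$$\Phi_{(w_0)}=\{\beta\in\Delta_+\mid w_0^{-1}\beta\in\Delta_-\}=\Delta_+.$$
First I would record this identity, noting that in finite type every positive root is real, so $\Delta_+\subseteq\Delta^{\re}$ and there is no distinction between the positive real root groups and all positive root groups.

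Next I would identify $U_{(w_0)}$ with $U$. By definition $U_{(w_0)}=\langle U_\beta\mid\beta\in\Phi_{(w_0)}\rangle=\langle U_\beta\mid\beta\in\Delta_+\rangle$. In a semi-simple group of finite type the positive unipotent subgroup $U=\langle\chi_{\alpha_i}(t)\rangle$ is generated by the root groups $U_\beta$ over all positive roots $\beta$: for nonsimple $\beta$ the group $U_\beta$ is produced from the simple root subgroups by iterated commutation together with conjugation by Weyl group lifts, using the Chevalley commutator relations. Hence $U_{(w_0)}=U$.

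With this identification in hand, Theorem~\ref{T-main} applied to $w=w_0$ gives that $U=U_{(w_0)}$ is strongly integral with respect to $\widetilde{w_0}\,v_{\lambda}$. By Lemma~\ref{Strong}, strong integrality implies integrality, so $U$ is integral, as claimed. The hypothesis that the weights of $V$ contain all the fundamental weights enters only to guarantee that $G(\Q)$ is the simply connected Chevalley group $G_\Z(\Q)$, so that the integral structures $G(\Z)$ and $\Gamma(\Z)$ coincide with those of classical Chevalley theory and the assertion ``$U$ is integral'' is the intended one.

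The main obstacle I anticipate is not the logical chain, which is short, but verifying cleanly the generation statement $U=\langle U_\beta\mid\beta\in\Delta_+\rangle$ entirely within the representation-theoretic framework of the paper, that is, checking that the groups $U_\beta$ for nonsimple $\beta$ (defined via $\chi_\beta(t)$ in Subsection~\ref{generators}) lie in, and together generate, the same group as the simple root subgroups, without appealing to external structure theory. Once this identification is secured, everything else follows formally from Theorem~\ref{T-main} and Lemma~\ref{Strong}.
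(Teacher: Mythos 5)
Your proposal is correct and follows essentially the same route as the paper: take the longest element $w_0$, note $\Phi_{(w_0)}=\Delta_+$ so $U_{(w_0)}=U$, and apply Theorem~\ref{T-main} together with Lemma~\ref{Strong}. The generation issue you flag is actually moot under the paper's conventions, since in Subsection~\ref{generators} $U$ is \emph{defined} as $U_{\Delta_+^{\re}}=\langle U_\beta\mid\beta\in\Delta_+^{\re}\rangle$, which in finite type coincides with $\langle U_\beta\mid\beta\in\Delta_+\rangle=U_{(w_0)}$ by definition.
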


 Theorem~\ref{T-main} also yields integrality of other subgroups of $U$.
In Subsection~\ref{SS-abel} we prove:
\begin{theorem}\label{abel}
If $M$ is a subgroup of $U$ generated by commuting real root subgroups, then
$M$ is integral.
\end{theorem}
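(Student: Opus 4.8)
The plan is to deduce this from Theorem~\ref{T-main} by enclosing, element by element, the root groups that occur inside a single inversion subgroup. Since integrality is a condition on individual elements (recall $M$ is integral iff $g\cdot V_{\Z}=V_{\Z}$ implies $g\in G(\Z)$ for every $g\in M$, the inclusion $G(\Z)\subseteq\Gamma(\Z)$ being automatic), I would fix $g\in M$ with $g\cdot V_{\Z}=V_{\Z}$ and aim to show $g\in G(\Z)$. Because $M$ is generated by pairwise commuting real root subgroups, such a $g$ is a finite product $g=\prod_{a=1}^{k}\chi_{\beta_a}(t_a)$, where $\beta_1,\dots,\beta_k$ are positive real roots whose root subgroups pairwise commute. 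Everything then reduces to a single combinatorial input.

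The key step I would isolate is the claim that \emph{any finite set $\{\beta_1,\dots,\beta_k\}$ of positive real roots whose root subgroups pairwise commute is contained in $\Phi_{(w)}$ for some $w\in W$.} Granting this, the proof finishes quickly: all $\beta_a\in\Phi_{(w)}$ gives $g\in U_{(w)}$; since $\widetilde{w}\in G(\Z)\subseteq\Gamma(\Z)$ we have $\widetilde{w}\,v_{\lambda}\in V_{\Z}$, hence $g\cdot\widetilde{w}\,v_{\lambda}\in g\cdot V_{\Z}=V_{\Z}$; and strong integrality of $U_{(w)}$ with respect to $\widetilde{w}\,v_{\lambda}$ (Theorem~\ref{T-main}) forces $g\in G(\Z)$. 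As $g$ was arbitrary, $M$ is integral. Note that the choice of $w$ may depend on $g$, which is harmless since we only ever test one element at a time; in particular $M$ itself need not sit inside a single $U_{(w)}$.

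To prove the combinatorial claim I would first translate the hypothesis: commutativity of $U_{\beta_a}$ and $U_{\beta_b}$ forces, through the Chevalley commutator formula, that no positive integral combination $i\beta_a+j\beta_b$ with $i,j\geq 1$ is a real root, so in particular the pairwise pairings $\langle\beta_a,\beta_b^{\vee}\rangle$ are nonnegative. Geometrically, enclosing $\{\beta_a\}$ in an inversion set amounts to producing a chamber $w\cdot C$ (with $C$ the fundamental chamber) from which $C$ is separated by every hyperplane $H_{\beta_a}$, i.e.\ to showing that the open cone $R=\{x:\langle\beta_a,x\rangle<0\ \text{for all }a\}$ meets the interior of the Tits cone; the chamber containing a generic point of $R\cap(\text{Tits cone})^{\circ}$ is then of the required form. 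The only obstruction to $R$ meeting the Tits cone is a nontrivial positive relation among the $\beta_a$ that stays nonnegative on the Tits cone --- exactly the phenomenon $\alpha_1+\alpha_0=\delta$ in the affine case, where $U_{\alpha_1}$ and $U_{\alpha_0}$ famously do \emph{not} commute --- and I expect the no-combination condition above to rule this out. Alternatively one can argue by induction on $k$, enlarging the inversion set upward in the weak order.

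I expect this combinatorial claim to be the main obstacle. Inversion sets are precisely the finite biconvex sets of positive roots, and a general finite set of positive real roots need not embed in one; the entire argument rests on showing that commuting root groups avoid the biconvexity obstruction. The delicate points are the faithful passage from ``commuting'' to the biconvexity-compatible condition on the $\beta_a$, and, in infinite type, the verification that the separating cone $R$ genuinely meets the interior of the Tits cone rather than escaping it.
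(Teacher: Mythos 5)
Your reduction to Theorem~\ref{T-main} is the right instinct, but the entire weight of your argument rests on the combinatorial claim that a finite set of positive real roots with pairwise commuting root groups lies inside a single inversion set $\Phi_{(w)}$, and that claim is not proved --- you yourself flag it as the main obstacle and offer only a heuristic. Two distinct difficulties are being elided. First, even the passage from ``$U_{\beta_a}$ and $U_{\beta_b}$ commute in $\GL(V)$'' to ``no $i\beta_a+j\beta_b$ with $i,j\ge 1$ is a root'' (equivalently, to prenilpotence of the pair) requires an argument in the representation-theoretic group; the Chevalley commutator formula is only available for prenilpotent pairs to begin with, so it cannot be invoked to rule out non-prenilpotent commuting pairs without circularity. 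Second, and more seriously, containment in a common inversion set is a \emph{simultaneous} separation statement: you need the open cone $R=\{x:\langle\beta_a,x\rangle<0\text{ for all }a\}$ to meet the interior of the Tits cone, and the obstruction is a nonzero nonnegative combination $\sum_a c_a\beta_a$ that is nonnegative on the whole Tits cone. Ruling this out from \emph{pairwise} hypotheses is a Helly-type assertion that does not follow formally: in indefinite type the relevant dual cone is full-dimensional, and a positive combination of three or more pairwise compatible real roots could a priori land in it even when no two of them exhibit the affine $\alpha_0+\alpha_1=\delta$ phenomenon. The sentence ``I expect the no-combination condition above to rule this out'' is exactly where the proof is missing.

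The paper avoids this claim entirely, and its route is worth internalizing. It takes a shortest word $u=\prod_j\chi_{\beta_j}(t_{\beta_j})\in M$ with $u\cdot V_\Z=V_\Z$ but $u\notin U(\Z)$, chooses any $w\in W$ with $\Phi_{(w)}\cap\{\beta_j\}\neq\emptyset$ --- which only needs the trivial fact (Lemma~\ref{invdec}) that each \emph{single} positive real root lies in some inversion set --- and uses commutativity to factor $u=u_{(w)}u^{(w)}$, where $u_{(w)}$ collects the factors with $\beta_j\in\Phi_{(w)}$ and $u^{(w)}$ those with $w^{-1}\beta_j>0$. Then $\widetilde{w}^{-1}u^{(w)}\widetilde{w}\in U$ fixes $v_\lambda$ by Lemma~\ref{hwtstab}, so $u_{(w)}\widetilde{w}\cdot v_\lambda=u\widetilde{w}\cdot v_\lambda\in V_\Z$; Theorem~\ref{T-main} forces $u_{(w)}\in U(\Z)$, and then $u^{(w)}$ is a strictly shorter word stabilizing $V_\Z$, contradicting minimality. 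If you wish to salvage your approach you must actually prove the embedding claim (including the group-theoretic input that commuting forces prenilpotence in $G^\lambda(\Q)$); otherwise, restructure the argument along the lines above.
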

In Section~\ref{SS-rank2}, we consider the case where $G$ has rank 2. In this case $U=U_{1}{\Large{*}}\,U_{2}$  where each $U_{i}$ is generated by `half' the positive real roots (see \cite{CKMS}).

\begin{theorem}\label{T-rk2} Let $G(\Q)$ be of rank 2 with positive unipotent subgroup $U=U_{1}{\Large{*}}\,U_{2}$ where $U_i$ are abelian if $A$ is symmetric, and nilpotent of class 2 if $A$ is not symmetric.   Then the groups $U_i$ are integral.
\end{theorem}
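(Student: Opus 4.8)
The plan is to derive Theorem~\ref{T-rk2} as an application of the main results already in hand, rather than proving it from scratch. In the rank 2 case, the factorization $U=U_{1}\,{\Large{*}}\,U_{2}$ is given (following \cite{CKMS}), and the key structural fact is that each $U_i$ is generated by ``half'' of the positive real roots in such a way that these generating root subgroups interact in a controlled manner. First I would treat the symmetric case separately from the non-symmetric case, since the hypothesis distinguishes them: when $A$ is symmetric, each $U_i$ is abelian, and when $A$ is not symmetric, each $U_i$ is nilpotent of class $2$.

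In the symmetric (abelian) case, the argument should be essentially immediate from Theorem~\ref{abel}. Each $U_i$ is generated by real root subgroups, and abelianness means exactly that these root subgroups commute with one another. I would verify that the generating root subgroups of $U_i$ are indeed real root subgroups (as opposed to imaginary) and that they pairwise commute, so that $U_i$ falls directly under the hypotheses of Theorem~\ref{abel}. That theorem then yields integrality of $U_i$ with no further work, since ``a subgroup of $U$ generated by commuting real root subgroups'' is precisely the situation.

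The non-symmetric case is where the real content lies, because the $U_i$ are nilpotent of class $2$ and hence \emph{not} generated by commuting root subgroups, so Theorem~\ref{abel} does not apply directly. Here I would instead try to realize each $U_i$ as an inversion subgroup $U_{(w)}$ for a suitable $w\in W$, and invoke Theorem~\ref{T-main}. In rank $2$, the Weyl group is dihedral, and the sets $\Phi_{(w)}$ for the elements $w$ exhaust initial segments of the positive real roots in the two possible orderings coming from the two reduced-word directions; the claim in \cite{CKMS} that $U=U_1\,{\Large{*}}\,U_2$ should correspond to splitting the positive real roots into the two ``convex'' halves, each of which is the set $\Phi_{(w)}$ for a limiting element. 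I would need to check that $U_i$ equals $U_{(w_i)}$ (or is contained in such, using that a subgroup of an integral group is integral), possibly passing through a limit over finite truncations $U_{(w)}$ for longer and longer $w$.

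The main obstacle will be this identification in the non-symmetric case: the inversion subgroups $U_{(w)}$ are indexed by genuine Weyl group elements and hence correspond to \emph{finite} sets $\Phi_{(w)}$ of positive real roots, whereas each $U_i$ in the factorization $U=U_1\,{\Large{*}}\,U_2$ is generated by infinitely many real roots (``half'' of $\Delta_+^{\re}$, which is infinite when $A$ is of indefinite or affine type). Thus $U_i$ is a union (or closure) of an increasing chain of genuine inversion subgroups $U_{(w)}$ rather than a single one. I expect the careful point to be showing that strong integrality, or at least integrality, is preserved under this limiting process: since integrality only needs to be tested element by element, and any single $g\in U_i$ lies in some finite $U_{(w)}$, the integrality of each $U_{(w)}$ (from Theorem~\ref{T-main}, via the implication that strong integrality gives integrality, Lemma~\ref{Strong}) should transfer to $U_i$. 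I would carry this out by writing $U_i=\bigcup_w U_{(w)}$ over the appropriate cofinal family of $w$, noting that each finite inversion subgroup is integral, and concluding that the union is integral because the integrality condition is checked on individual elements.
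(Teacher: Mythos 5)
Your proposal is correct and, in its essential content, matches the paper: the paper proves this via Proposition~\ref{P-addunion}, writing each $\Omega_i$ as the increasing union $\bigcup_n \Phi_{(w_{i_1}\cdots w_{i_n})}$ over alternating reduced words, so that any element of $U_i$ lies in some finite inversion subgroup $U_{(w)}$, which is integral by Theorem~\ref{T-main}. Your case split is harmless but unnecessary --- the union-of-inversion-subgroups argument you give for the non-symmetric case works uniformly and is exactly what the paper does, so the appeal to Theorem~\ref{abel} in the symmetric case can be dropped.
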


Though our methods do not currently extend to a proof of integrality for $U$, 
we conjecture{\footnote{An earlier paper by a subset of the authors (arXiv:1803.11204v2 [math.RT] Proposition 6.4) contains an error.}} that integrality holds for $U$. 
 It would then be straightforward to show that $G(\Z)=\Gamma(\Z)$ in the symmetrizable Kac--Moody case.

The authors would like to thank Shrawan Kumar for his interest in this work and for helpful discussions.

\section{Kac--Moody Algebras}\label{S-prelim}
In this section, we review some facts about Kac-Moody algebras and their representations.

\subsection{Generalized Cartan matrices}\label{GCM}
Let $I=\{1,2,\dots,\ell\}$ and let $A=(a_{ij})_{i,j\in I}$ be a {\it generalized Cartan matrix}. That is,
\begin{align*}
a_{ij}&\in{\Z};&  
a_{ii}&=2;\\
a_{ij}&\leq 0, \text{ for } i\neq j; \text{ and}&
a_{ij}& =0 \iff a_{ji}=0,
\end{align*}
for all $i,j\in I$.
The matrix $A$ is \emph{symmetrizable} if there exist positive rational numbers $q_1,\dots, q_{\ell}$, such that
the matrix  $\diag(q_1,\dots, q_\ell) A$ is symmetric.
We say that $A$ is of \emph{finite type} if $A$ is positive definite, and that $A$ is of {\it affine type} if $A$ is positive semidefinite but not positive definite. If $A$ is not of finite or affine type, we say that $A$ has \emph{indefinite type}.  
In particular, $A$ is of \emph{hyperbolic type} if $A$ is neither of finite nor affine type, but every proper, indecomposable submatrix is either of finite or of affine type. 

We  assume throughout that $A$ is a  symmetrizable generalized Cartan matrix.
\subsection{Generators and relations for $\frak g$} 
Let $\frak{h}$ be a $\Q$-vector space of dimension $2\ell-\rank(A)$, and let $\langle\circ,\circ\rangle: \frak{h}^{\ast}\times\frak{h}\to \Q$ denote the natural nondegenerate bilinear pairing between $\frak{h}$ and its dual. 
Fix \emph{simple roots} $\Pi=\{\alpha_1,\dots,\alpha_{\ell}\}\subseteq \frak{h}^{\ast}$ and \emph{simple coroots}
$\Pi^{\vee}=\{\alpha_1^{\vee},\dots,\alpha_{\ell}^{\vee}\} \subseteq
\frak{h}$ such that $\Pi$ and $\Pi^{\vee}$ are linearly independent, and $\langle\alpha_j,\alpha_i^{\vee}\rangle=\alpha_j(\alpha_i^{\vee})=a_{ij}$.

As in \cite{M1}, \cite{M2}, and \cite[Theorem 9.11]{Kac90}, the associated \emph{Kac--Moody algebra} $\frak{g}$ is the Lie algebra over $\Q$ with generating set  
$\frak{h}\cup\{e_i,f_i\mid i\in I\}$ and defining relations:
\begin{align*}
 [h,h']&=0;\\
[h,e_i]&=\langle\alpha_i,h\rangle e_i;&
[h,f_i]&=-\langle\alpha_i,h\rangle f_i;\\
[e_i,f_i]&=\alpha_i^{\vee};&
[e_i,f_j]&=0;\\
(\ad e_i)^{-a_{ij}+1}(e_j)&=0;&
(\ad f_i)^{-a_{ij}+1}(f_j)&=0;
\end{align*}
for $h,h'\in\frak{h}$, and $i,j\in I$ with $i\ne j$.

\subsection{Roots and the Weyl group}\label{RWG}
The \emph{roots}  of $\frak{g}$ are the nonzero  
$\a\in\frak{h}^{\ast}$ for which the corresponding root space
$$\frak{g}_{\alpha} := \{x\in \frak{g}\mid[h,x]=\alpha(h)x\text{ for all $h\in \frak{h}$}\}$$
is nontrivial. The simple roots $\a_i$ have root spaces $\frak{g}_{\a_i}=\Q e_i$.  
Every root $\alpha$ can be written in the form $\alpha=\sum_{i=1}^{\ell} k_i\alpha_i$ where the $k_i$ are integers, with either all $k_i \ge 0$, in which case $\alpha$ is called {\it positive}, or all $k_i\leq 0$, in which case $\alpha$ is called {\it negative}.  We denote the set of roots by $\Delta$ and the set of positive (resp.\ negative) roots by $\Delta_+$ (resp.\ $\Delta_-$). 
The \emph{root lattice} is $Q:=\mathbb{Z}\Pi\subseteq\frak h^*$ and the \emph{coroot lattice} is $Q^\vee:=\mathbb{Z}\Pi^\vee\subseteq\frak h$.

The Lie algebra  $\frak{g}$ has a triangular decomposition  (\cite[Theorem 1.2]{Kac90})
$$\frak{g}=\frak{n}^- \oplus \frak{h} \oplus \frak{n}^+,$$ 
where 
$$\frak{n}^+ =\bigoplus_{\alpha\in\Delta_+}\frak{g}_{\alpha}\quad\text{and}\quad
\frak{n}^- = \bigoplus_{\alpha\in\Delta_-}\frak{g}_{\alpha}.$$

Since $A$ is  symmetrizable,   $\frak{g}$ admits a nondegenerate symmetric invariant bilinear form $(\circ,\circ)$ satisfying 
\begin{align*}
  \left(\a^\vee_{i},\a^\vee_{j} \right) &= a_{ij}, &  \left({e}_{i},{f}_{j} \right) &= \delta_{ij}, \\
  \left(\mathfrak{h}, \mathfrak{n}_{+} \right)&=0,&
  \left(\mathfrak{h}, \mathfrak{n}_{-} \right)&=0.
\end{align*}

We define the \emph{simple reflection} $w_i: \frak h^\ast\to\frak h^\ast$ by
$$w_i(v)    =
   v - \la v,\alpha_i^{\vee}\ra\alpha_i.$$
The group $W\subseteq \GL(\frak{h}^{\ast})$ generated by the simple reflections is called the {\it Weyl group}. The induced bilinear form on ${\frak{h}^{\ast}}$ is $W$-invariant. 
The group $W$ comes equipped with a length function
$\ell\colon W\longrightarrow \mathbb{Z}_{\ge0}$
defined as $\ell(w)=k$, where $k$ is the smallest number such that $w$ is the product of $k$ simple root reflections.
Such a minimal length word $$w=w_{i_1}w_{i_2}\cdots w_{i_k}$$ is called a \emph{reduced word}.

A root $\alpha\in\Delta$ is called \emph{real} if there exists $w\in W$ such that $w\alpha$ is a
simple root. A root $\alpha$ which is not real is called 
\emph{imaginary}. 
We denote by $\Delta^{\re}$ the set of real roots and $\Delta^{\im}$ the
set of imaginary roots. We have $\Delta^{\re}=W\Pi$.  Similarly define sets $\Delta^{\re}_\pm :=  \Delta^{\re}\cap \Delta_\pm$
and  $\Delta^{\im}_\pm :=  \Delta^{\im}\cap \Delta_\pm$.
For $\a\in\Delta^{\re}$, the reflection in $\a$ is defined by
$$w_\a(v)    =   v - \la v,\alpha^{\vee}\ra\alpha,$$
where $\alpha^\vee$ is the usual coroot of $\alpha$ defined using the bilinear form $(\circ, \circ)$ on $\mathfrak{h}$.
Since $w\a=\a_i$ for some $w\in W$, $i\in I$, we have $w_\a =w w_i w^{-1}\in W$.


\subsection{Integrable highest weight modules}\label{repthy} 
The {\it weight lattice} $P \subseteq\frak{h}^{\ast}$ is the dual lattice of $Q^\vee$, that is,
$$P=\{\lambda\in \frak{h}^{\ast}\mid \la \lambda, \alpha_i^{\vee}\ra\in\Z,\ i\in I\}.$$
An element $\lambda\in P$ is {\it dominant} if $\la \lambda, \alpha_i^{\vee}\ra\geq 0$ for all $i\in I$, and is {\it regular} if $\la \lambda, \alpha_i^{\vee}\ra\ne 0$ for all $i\in I$. We denote the set of dominant elements of $P$ 
by $P_{+}$ and write $Q_+=\sum_{i=1}^\ell \Z_{\ge0}\a_i$.
The \emph{dominance order} on $\frak{h}^{\ast}$ is defined by $\lambda <\mu$ if $\mu-\lambda\in Q_+$.

Let $V$ be a $\frak{g}$-module with defining homomorphism 
$\rho:\frak g\to\End(V)$.
Then the \emph{weight space} of~$\mu\in \frak{h}^\ast$ is
$$V_\mu:= \{v\in V\mid \rho(h)(v) = \mu(h) v \text{ for all $h\in\frak{h}$} \},$$
and the set of \emph{weights} of $V$ is
$$\wts(V)=\left\{\mu\in \frak h^\ast \mid V_{\mu}\neq 0\right\}.$$
Recall also that $x\in\frak{g}$ acts \emph{locally nilpotently} on $V$ if, for each $v\in V$, there is a natural number $n$ such that
$\rho(x)^{n}(v)=0$. 
Then  $V$ is called an \emph{integrable} module if
it is a direct sum of its weight spaces, that is
$V=\bigoplus_{\mu\in \wts(V)} V_{\mu}$,
and each generator
$e_i$ or $f_i$ acts locally nilpotently on~$V$.


We call $\lambda\in\wts(V)$ the \emph{highest weight} of $V$ if it is largest element of $\wts(V)$ in the dominance order. 
If $V$ has a highest weight we call it a \emph{highest weight module}.
If $\mu$ is a weight of a highest weight module $V$, then $\mu=\lambda-\beta$ for some $\beta\in Q_+$. 
Among all modules with highest weight $\lambda\in\frak{h}^{\ast}$, there is a unique irreducible $\frak{g}$--module  (Prop. 9.3, \cite{Kac90}), which we denote by $V^{\lambda}$. 
The module $V^{\lambda}$ is integrable if and only if $\lambda\in P_+$, that is, $\lambda$ is a dominant  weight (Lemma 10.1, \cite{Kac90}). In this case, we get $\wts(V^\lambda)\subseteq P$. 

From now on, we take $V =V^\lambda$ to be an integrable highest weight $\mathfrak{g}$-module over $\mathbb{Q}$ for a fixed dominant, regular weight $\lambda$ and a fixed highest weight vector $v_{\lambda}$. For $x\in \mathfrak{g}$ and $v\in V$, we denote the action of $x$ on $v$ by $x\cdot v:=\rho(x)(v)$.

\section{Representation-theoretic Kac--Moody groups}\label{S-KMring}
Let $\frak{g}$ be the Kac--Moody algebra  over $\Q$ with symmetrizable generalized Cartan matrix $A$.
Let $V =V^\lambda$ to be an integrable highest weight $\mathfrak{g}$-module over $\mathbb{Q}$ for a fixed dominant, regular weight $\lambda$ and let $v_{\lambda}$ be a fixed highest weight vector .

The Chevalley involution $\omega$ is an automorphism of $\frak{g}$ with $\omega^2=1$, $\omega(h)=-h$ for all $h\in\frak h$, and $\omega(\frak{g}_\a)=\frak{g}_{-\a}$ for all $\a\in\Delta$.

\subsection{$\Z$--forms} 
Let ${\mathcal U}={\mathcal U}_{\Q}(\frak{g})$ be the universal enveloping algebra of $\frak{g}$. 
Let
\begin{itemize}
\item ${\mathcal U}^+_{{\mathbb{Z}}}$ be the ${\mathbb{Z}}$-subalgebra generated by 
$\dfrac{{e_i}^{m}}{m!}$ for $i\in I$
and $m\geq 0$;

\item \vspace{1mm}${\mathcal U}^-_{{\mathbb{Z}}}$ be the ${\mathbb{Z}}$-subalgebra generated by 
$\dfrac{{f_i}^{m}}{m!}$ for $i\in I$
and $m\geq 0$; and

\item \vspace{1mm}${\mathcal U}^0_{{\mathbb{Z}}}$ 
be the ${\mathbb{Z}}$-subalgebra generated by 
$$\left (\begin{matrix}
h \\ m\end{matrix}\right ):=\dfrac{h(h-1)\dots (h-m+1)}{m!},$$ for
$h\in Q^{\vee}$ and $m\geq 0$.
\end{itemize}
As in \cite{CG} and \cite{Ti81}, \cite{Ti87}, the \emph{$\Z$-form of $\mathcal U$} is ${\mathcal U}_{{\mathbb{Z}}}$, the ${\mathbb{Z}}$-subalgebra generated by ${\mathcal U}^+_{{\mathbb{Z}}}$, ${\mathcal U}^-_{{\mathbb{Z}}}$, and ${\mathcal U}^0_{{\mathbb{Z}}}$.
By \cite{Ti87}, we have $ \mathcal U^-_{ \Z}\otimes  \mathcal U_{ \Z}^0\otimes \mathcal U^+_{ \Z}\cong  \mathcal U_{ \Z}$.

We  define 
 $$\frak{g}_{\Z}\ =\ \frak{g}\cap {\mathcal U}_{\Z}\quad\text{and}\quad
 \frak{h}_{\Z}\ =\ \frak{h}\cap {\mathcal U}_{\Z}.$$ 
Note that $\frak{g}=\frak{g}_{\Z}\otimes_{\Z}\Q$  and $\frak{h}=\frak{h}_\Z\otimes_\Z\Q$.

Fix the highest weight vector $v_\lambda$ in $V$. 
We have
$\mathcal{U}^+_{\mathbb{Z}}\cdot v_{\lambda}=\mathbb{Z} v_{\lambda}$
since every $x_\alpha$ with $\alpha\in \Delta^{\re}_+$ annihilates $v_{\lambda}$. Also
$\mathcal{U}^0_{\mathbb{Z}}\cdot v_{\lambda}=\mathbb{Z} v_{\lambda}$
since ${\mathcal U}^0_{{\mathbb{Z}}}$ acts  on $v_{\lambda}$ as scalar multiplication by an integer. Therefore
$\mathcal{U}_{\mathbb{Z}}\cdot v_{\lambda}=\mathcal{U}^-_{\mathbb{Z}}\cdot v_{\lambda}.$
We define the \emph{$\Z$-form of $V$} to be
$$V_{\mathbb{Z}}  := \mathcal{U}_{\mathbb{Z}}\cdot v_{\lambda} = \mathcal{U}^-_{\mathbb{Z}}\cdot v_{\lambda}$$ and  note that
$V=V_{\mathbb{Z}}\otimes_\Z \mathbb{Q}.$
For $\mu\in \wts(V)$, set $V_{\mu, \mathbb{Z}}=V_{\mu}\cap V_{\mathbb{Z}}$.

\subsection{The  groups $G(\Q)$ and $G(\Z)$}\label{generators}
For $i\in I$ and $t\in \mathbb{Q}$  set 
$$\chi_{\alpha_{i}}(t) =\exp (t \rho(e_i))=\sum^\infty_{m=0}\frac{t^m \rho\left(e_i\right)^m}{m!},\quad 
\chi_{-\alpha_{i}}(t) =\exp (t \rho(f_i))=\sum^\infty_{m=0}\frac{t^m \rho\left(f_i\right)^m}{m!},
$$ 
where $\rho$ is the defining homomorphism for $V=V^{\lambda}$.
Since $V$ is integrable, the $\chi_{\alpha_{i}}(t)$ are elements of $\GL(V)$. 
We let $G(\mathbb{Q})\le \GL(V)$ be the group generated by $\chi_{\pm\alpha_{i}}(t)$ for $i\in I$ and $t\in \mathbb{Q}$.
We  refer to $G(\Q)=G^{\lambda}(\mathbb{Q})$ as a \emph{minimal representation-theoretic Kac--Moody group}.
Similarly we define $G(\mathbb{Z})$ as the subgroup of $G(\Q)$ generated by $\chi_{\pm\alpha_{i}}(t)$ for $i\in I$ and $t\in \mathbb{\Z}$.
We denote the action of $g\in G$ on $v\in V$ by $g\cdot v$.

For each $i\in I$ and $t\in \mathbb{Q}^\times$, we define
$$\widetilde{w}_{i}(t)= \chi_{\alpha_i}(t)\chi_{-\alpha_i}(-t^{-1})\chi_{\alpha_i}(t) $$ 
and set $\widetilde{w}_{i}=\widetilde{w}_{i}(1)$. 
Next set
$$h_{\alpha_i}(t)\ =\ \widetilde{w}_{i}(t) \widetilde{w}_{i}(1)^{-1}$$
for $t\in Q^{\times}$ and $i\in I$, and let $H\subseteq G$ be the subgroup generated by the $h_{\alpha_i}(t)$.
Let $N$ denote the subgroup of $G(\Q)$ generated by $H$ and the $\widetilde{w}_{i}$. By Lemma~4 in Section 5.4 of \cite{Ti87}, there exists a unique endomorphism
$N\longrightarrow W$
with kernel $H$.
For $w\in W$, take a reduced word $w=w_{i_1}w_{i_2}\cdots w_{i_k}$ and define
$$ \widetilde{w}:= \widetilde {w}_{i_1} \widetilde {w}_{i_2}\cdots \widetilde{w}_{i_k}.$$ 
Now $\widetilde{w}_{i}\in G(\mathbb{Z})\subseteq \Gamma(\Z)$ and so
\begin{eqnarray*}
\widetilde{w}\cdot V_{\mathbb{Z}}= V_{\mathbb{Z}}
\end{eqnarray*}
for all $w\in W$.

Note that the adjoint representation makes $\mathfrak{g}$ into a $G$-module and we also get $\Ad(\widetilde{w})(\mathfrak{g}_\Z)=\mathfrak{g}_\Z$ as in \cite{MP95}.
Let $\alpha\in\Delta_+^{\re}$, then $\alpha=w\alpha_{i}$ for some $w\in W$ and $i\in I$.
We define the \emph{root vectors} (\cite{MP95}, Section 4.1) by
\begin{align*}
x_{\a}:= \Ad(\widetilde{w})(e_i) \in\frak{g}_{\alpha}\cap \frak{g}_\Z \quad\text{and}\quad
x_{-\a}:=\Ad(\widetilde{w})( f_i) \in\frak{g}_{-\alpha}\cap \frak{g}_\Z,
\end{align*}
so that $x_{-\a}=-\omega(x_\a)$ and $[x_{\a},x_{-\a}] = \a^{\vee}$. 
In particular, $x_{\a_i}=e_i$ and $x_{-\a_i}=f_i$. The set $\{x_{\a},
 x_{-\a},\ \a^{\vee}\}$ 
forms a basis for a 
subalgebra isomorphic to $\frak{sl}_2$.

\begin{lemma}\label{L-wact}
Let $w\in W$, $\mu\in \wts(V)$, $\a\in \Delta_+^{\re}$, $m\in\N$.
\begin{enumerate}[\hspace{.5cm}(a) ]
\item $\widetilde{w}\cdot V_{\mu,\Z} = V_{w\mu,\Z}$.
\item\label{L-wact-onVlambda} $V_{w\lambda,\Z} = \widetilde{w}\cdot V_{\lambda,\Z} =\Z v_{w\lambda}$ where $v_{w\lambda}=\widetilde{w}\cdot v_\lambda$.
\vspace{1mm}
\item\label{L-wact-onx} $\dfrac{{x_{\pm\a}}^m}{m!}\in \mathcal{U}_\Z$ and $\dfrac{{x_{\pm\a}}^m}{m!}\cdot V_{\mu,\Z} \subseteq
V_{\mu\pm m\a,\Z}$.
\end{enumerate}
\end{lemma}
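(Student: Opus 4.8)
The plan is to prove the three parts in a logical order, establishing (a) first as the foundational claim, then specializing to (b), and finally treating the operator statement (c) which is essentially independent. For part (a), the key observation is that $\widetilde{w}$ normalizes the torus action in the sense that it permutes weight spaces according to $w$. First I would recall that since $\widetilde{w}$ is a lift of $w\in W$ to $N\subseteq G(\Q)$ and $V$ is a $G$-module, conjugation shows that $\widetilde{w}$ maps $V_\mu$ to $V_{w\mu}$ over $\Q$: for $v\in V_\mu$ and $h\in\frak h$, one computes $\rho(h)(\widetilde{w}\cdot v) = \widetilde{w}\cdot\rho(\Ad(\widetilde{w}^{-1})h)(v)$, and since $\Ad(\widetilde{w}^{-1})$ acts on $\frak h$ by $w^{-1}$ on the dual, this yields the eigenvalue $\mu(w^{-1}h)=(w\mu)(h)$. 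Thus $\widetilde{w}\cdot V_\mu = V_{w\mu}$ over $\Q$. To upgrade this to the integral statement $\widetilde{w}\cdot V_{\mu,\Z}=V_{w\mu,\Z}$, I would use that $\widetilde{w}\cdot V_\Z = V_\Z$ (already established in the excerpt, since each $\widetilde{w}_i\in G(\Z)\subseteq\Gamma(\Z)$). Intersecting $\widetilde{w}\cdot V_\mu=V_{w\mu}$ with $V_\Z$ and using $\widetilde{w}$-invariance of $V_\Z$ gives $\widetilde{w}\cdot(V_\mu\cap V_\Z)=V_{w\mu}\cap V_\Z$, which is exactly the claim.

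For part (b), I would specialize $\mu=\lambda$ in part (a), giving $\widetilde{w}\cdot V_{\lambda,\Z}=V_{w\lambda,\Z}$. The additional content is that $V_{\lambda,\Z}=\Z v_\lambda$ and hence $V_{w\lambda,\Z}=\Z v_{w\lambda}$ with $v_{w\lambda}:=\widetilde{w}\cdot v_\lambda$. Since $\lambda$ is the highest weight, the weight space $V_\lambda$ is one-dimensional, spanned by $v_\lambda$ over $\Q$; the integral claim $V_{\lambda,\Z}=\Z v_\lambda$ follows from the construction $V_\Z=\mathcal U_\Z^-\cdot v_\lambda$, since only the constant term of $\mathcal U_\Z^-$ contributes to the top weight space (any nontrivial product of $f_i$'s strictly lowers the weight below $\lambda$). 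Applying $\widetilde{w}$ and invoking part (a) then transports $\Z v_\lambda$ to $\Z v_{w\lambda}$, completing (b).

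For part (c), the first assertion $\frac{x_{\pm\alpha}^m}{m!}\in\mathcal U_\Z$ follows from the definition of the root vectors $x_{\pm\alpha}=\Ad(\widetilde{w})(e_i)$ or $\Ad(\widetilde{w})(f_i)$: since $\Ad(\widetilde{w})$ is an algebra automorphism preserving $\mathcal U_\Z$ (as noted in the excerpt, $\Ad(\widetilde{w})(\frak g_\Z)=\frak g_\Z$, and more strongly $\widetilde{w}$ conjugates the divided-power generators appropriately), we have $\frac{x_{\pm\alpha}^m}{m!}=\Ad(\widetilde{w})\bigl(\frac{e_i^m}{m!}\bigr)$ or $\Ad(\widetilde{w})\bigl(\frac{f_i^m}{m!}\bigr)$, each of which lies in $\mathcal U_\Z$ by definition of $\mathcal U_\Z^\pm$. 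The weight-shift inclusion $\frac{x_{\pm\alpha}^m}{m!}\cdot V_{\mu,\Z}\subseteq V_{\mu\pm m\alpha,\Z}$ then splits into a weight bookkeeping part and an integrality part: since $x_{\pm\alpha}\in\frak g_{\pm\alpha}$ raises (resp.\ lowers) weights by $\pm\alpha$, applying the $m$-th power sends $V_\mu$ into $V_{\mu\pm m\alpha}$ over $\Q$; and since $\frac{x_{\pm\alpha}^m}{m!}\in\mathcal U_\Z$ preserves $V_\Z=\mathcal U_\Z\cdot v_\lambda$, the image of $V_{\mu,\Z}=V_\mu\cap V_\Z$ lands in $V_{\mu\pm m\alpha}\cap V_\Z=V_{\mu\pm m\alpha,\Z}$.

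The main obstacle, and the step deserving the most care, is the integral upgrading in part (a) — specifically, justifying that $\widetilde{w}$ preserves the lattice $V_\Z$ compatibly with the weight-space decomposition. The rational statement is routine linear algebra on eigenspaces, but the integral statement requires that $\widetilde{w}\cdot V_\Z=V_\Z$ (established) interacts correctly with the direct-sum decomposition $V_\Z=\bigoplus_\mu V_{\mu,\Z}$; one must verify that this lattice decomposition is itself compatible with the weight grading, so that intersecting with a single weight space commutes with applying $\widetilde{w}$. This compatibility is the technical heart of the lemma and the foundation on which the subsequent strong-integrality arguments rest.
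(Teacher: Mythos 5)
Your proposal is correct and follows essentially the same route as the paper: establish the rational weight-space permutation $\widetilde{w}\cdot V_\mu=V_{w\mu}$ (the paper cites Lemma~3.8(a) of Kac and induction on a reduced word, where you give the conjugation computation directly), combine it with $\widetilde{w}\cdot V_\Z=V_\Z$ to get the integral statement, and then deduce (b) by specializing to $\mu=\lambda$ and (c) from $\Ad(\widetilde{w})$ preserving $\mathcal U_\Z$ together with the weight grading. The only remark worth making is that your closing worry about compatibility of the lattice with the weight grading is already dispatched by your own intersection argument, since $\widetilde{w}$ is invertible and so commutes with intersections.
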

\begin{proof}
Lemma~3.8(a) in \cite{Kac90} implies $\widetilde{w_i}\cdot V_\mu=V_{w_i\mu}$ for $i\in I$, and so
$\widetilde{w}\cdot V_\mu=V_{w\mu}$ by induction.
Combining this with $\widetilde{w}\cdot V_\Z=V_\Z$ and $V=\bigoplus_{\mu\in\wts(V)} V_\mu$ we get (a).
Parts (b) and (c) are now straightforward.
\end{proof}

For $t\in\Q$, set 
\begin{eqnarray*}
\chi_{\alpha}(t):=\exp (t \rho(x_{\alpha}))=\sum^\infty_{m=0}\frac{t^m \rho\left(x_{\alpha}\right)^m}{m!}\in\GL(V).
\end{eqnarray*} 
We have 
$\chi_{\alpha}(t)=\widetilde{w}\chi_{\alpha_i}(t)\widetilde{w}^{-1}\in G$.
We define the (real) \emph{root group} and its $\Z$-subgroup as
$$U_{\alpha}=\{\chi_{\alpha}(t)\mid t\in \mathbb{Q}\}\quad\text{and}\quad U_{\alpha}(\mathbb{Z})=\{\chi_{\alpha}(t)\mid t\in\mathbb{Z}\}.$$ 
Given a set of positive real roots $\Omega\subseteq\Delta_+^{\re}$, we define
$$U_\Omega=\langle U_\a\mid\a\in\Omega\rangle \quad\text{and}\quad 
U_\Omega(\Z)=\langle U_\a(\Z)\mid\a\in\Omega\rangle .$$
In particular, the  \emph{unipotent subgroup} is $U:=U_{\Delta_+^{\re}}$ and its $\Z$-subgroup is
$U(\Z):=U_{\Delta_+^{\re}}(\Z)$.
It is clear that $U_\Omega(\Z)\subseteq U_\Omega\cap G(\Z)$.
The reverse inclusion is conjectured to be true for reasonable choices of $\Omega$ but will not be needed for the results in this paper.

We end this section with the following:
\begin{lemma}\label{hwtstab}
For all $u\in U$, $u\cdot v_{\lambda}=v_{\lambda}$.
\end{lemma}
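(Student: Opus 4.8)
The plan is to reduce the statement to its generators and then use the defining property of the highest weight $\lambda$. The stabilizer $\Stab_G(v_\lambda)=\{g\in G(\Q)\mid g\cdot v_\lambda=v_\lambda\}$ is a subgroup of $G(\Q)$, and $U=U_{\Delta_+^{\re}}$ is by definition generated by the root groups $U_\alpha$ for $\alpha\in\Delta_+^{\re}$. Hence it suffices to show that every generator $\chi_\alpha(t)$, for $\alpha\in\Delta_+^{\re}$ and $t\in\Q$, lies in $\Stab_G(v_\lambda)$; the conclusion for arbitrary $u\in U$ then follows since the stabilizer is closed under products and inverses.

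The crux is the single computation $\rho(x_\alpha)\cdot v_\lambda=0$ for every positive real root $\alpha$. First I would note that $v_\lambda\in V_\lambda$, and that $\rho(x_\alpha)$ raises weights: by Lemma~\ref{L-wact}\eqref{L-wact-onx} applied with $\mu=\lambda$ and $m=1$, we have $\rho(x_\alpha)\cdot V_{\lambda}\subseteq V_{\lambda+\alpha}$. Since $\lambda$ is the highest weight of $V$, every weight $\mu\in\wts(V)$ satisfies $\mu\le\lambda$ in the dominance order, whereas $\lambda+\alpha>\lambda$ because $\alpha\in\Delta_+\subseteq Q_+$. Therefore $\lambda+\alpha\notin\wts(V)$, so $V_{\lambda+\alpha}=0$ and $\rho(x_\alpha)\cdot v_\lambda=0$.

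It follows that $\rho(x_\alpha)^{m}\cdot v_\lambda=0$ for all $m\ge 1$, so in the exponential series defining $\chi_\alpha(t)$ only the $m=0$ term survives:
\[
\chi_\alpha(t)\cdot v_\lambda=\sum_{m=0}^{\infty}\frac{t^m\rho(x_\alpha)^m}{m!}\cdot v_\lambda=v_\lambda.
\]
Thus each generator $\chi_\alpha(t)$ fixes $v_\lambda$, and since these generate $U$, every $u\in U$ satisfies $u\cdot v_\lambda=v_\lambda$.

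I do not expect a genuine obstacle here: the statement is immediate once the highest-weight property is invoked to force $V_{\lambda+\alpha}=0$. The only point requiring a word of care is that $U$ is generated by all positive real root groups rather than just the simple ones, so the argument must be run for arbitrary $\alpha\in\Delta_+^{\re}$ and not merely the $\alpha_i$; this is handled uniformly by the same weight-raising estimate from Lemma~\ref{L-wact}\eqref{L-wact-onx}, which holds for every positive real root.
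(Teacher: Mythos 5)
Your proof is correct and follows essentially the same route as the paper: reduce to a generator $\chi_\alpha(t)$, observe that $x_\alpha^{\,i}\cdot v_\lambda\in V_{\lambda+i\alpha}=0$ because $\lambda+i\alpha$ exceeds the highest weight $\lambda$ in the dominance order, and conclude that only the constant term of the exponential survives. The extra care you take in justifying the weight-raising via Lemma~\ref{L-wact} and in noting that the argument applies to all positive real roots (not just simple ones) matches the paper's intent exactly.
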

\begin{proof}
It suffices to prove this statement for  a generator $\chi_{\alpha}(t)$ of $U_\alpha\subseteq U$.  Now
\begin{align*}
\chi_{\alpha}(t)\cdot v_{\lambda}
=v_{\lambda}+tx_{\alpha} \cdot v_{\lambda}+t^{2}\,\frac{{x_{\alpha}}^{2}}{2!} \cdot v_{\lambda}+\dots+t^{m}\,\frac{{x_{\alpha}}^{m}}{m!} \cdot v_{\lambda},
\end{align*}
for some $m$ since $x_{\alpha}$ acts nilpotently.
 But ${x_{\alpha}}^{i}\cdot v_{\lambda}\in V_{\lambda+i\alpha}=0$, since $\lambda +i\alpha$ is not a weight of $V$ for $i>0$. Thus,
$
\chi_{\alpha}(t) \cdot v_{\lambda}=v_{\lambda}.
$
\end{proof}

\section{Strong integrality of real root groups}\label{integsec6}
Throughout this section, fix $\alpha\in \Delta^{\re}$.
The main result of this section is to prove strong integrality of the real root group
$$U_{\alpha}=\{\chi_{\alpha}(t)\mid t\in \mathbb{Q}\}.$$
This provides the base case for Theorem~\ref{T-main}. 
Integrality of $U_\a$ will follow from the following lemma.
\begin{lemma}\label{Strong} Strong integrality implies integrality.  
\end{lemma}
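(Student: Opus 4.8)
The plan is to unwind the two definitions directly; this is really a bookkeeping lemma, so the proof should be short. I would start by fixing a subgroup $M\le G(\Q)$ that is strongly integral with respect to some $v_0\in V_\Z$, and my goal is to show $M\cap G(\Z)=M\cap\Gamma(\Z)$. Since the excerpt already notes that $G(\Z)\subseteq\Gamma(\Z)$, one inclusion, namely $M\cap G(\Z)\subseteq M\cap\Gamma(\Z)$, is immediate and I would dispose of it in one sentence.

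For the reverse inclusion $M\cap\Gamma(\Z)\subseteq M\cap G(\Z)$, I would take an arbitrary $g\in M\cap\Gamma(\Z)$ and show $g\in G(\Z)$ (membership in $M$ being automatic). The key observation is that $g\in\Gamma(\Z)$ means $g\cdot V_\Z=V_\Z$ by the definition of the Chevalley subgroup. In particular, since $v_0\in V_\Z$, we get $g\cdot v_0\in V_\Z$. Now I invoke the hypothesis of strong integrality with respect to $v_0$: because $g\in M$ and $g\cdot v_0\in V_\Z$, the defining property of strong integrality yields $g\in G(\Z)$. This completes the inclusion, and hence $M\cap\Gamma(\Z)=M\cap G(\Z)$, i.e.\ $M$ is integral.

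There is no real obstacle here; the only point requiring any care is to keep straight the logical direction of the strong integrality condition, which tests a single vector $v_0$ rather than the whole lattice $V_\Z$. The proof works precisely because strong integrality demands less on the input side (one vector lands in $V_\Z$) while concluding the same membership $g\in G(\Z)$, so it is logically stronger than integrality, which would require controlling the full action on $V_\Z$. I would phrase the argument so that it is transparent that only $v_0\in V_\Z$ and the inclusion $G(\Z)\subseteq\Gamma(\Z)$ are used, making clear why the implication does not reverse in general.
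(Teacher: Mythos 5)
Your proof is correct and follows essentially the same route as the paper: take $g\in M\cap\Gamma(\Z)$, note $g\cdot v_0\in V_\Z$ since $v_0\in V_\Z$, and apply strong integrality to conclude $g\in G(\Z)$, with the reverse inclusion coming from $G(\Z)\subseteq\Gamma(\Z)$. No issues.
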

\begin{proof} Let $M\subseteq G$ be strongly integral  with respect to $v_0\in V^{\lambda}$. 
Let $g\in M\cap\Gamma(\Z)$. Then $g \cdot v \in V^{\lambda}_{\mathbb{Z}}$ for all   $v \in V^{\lambda}_{\mathbb{Z}}$. In particular $g\cdot v_0 \in V^{\lambda}_{\mathbb{Z}}$ and so $g \in G(\Z)$ by strong integrality.
\end{proof}


Define
$$x_\a^{[m]} := \frac{{x_\a}^m}{m!}.$$
The following technical lemma is essential to our integrality results:
\begin{lemma}\label{L-xx}
Let $\mu$ be a weight such that $\mu+\alpha$ is not a weight of $V$. 
Further assume that $v_{\mu}\in V_{\mu}$ and $n:= \langle\mu,\a^\vee\rangle>0$.
Then
$$x_{\alpha}^{[n]} x_{-\alpha}^{[n]} \cdot v_{\mu} = v_\mu.$$
\end{lemma}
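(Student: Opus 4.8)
The plan is to reduce the identity to a computation inside the single copy of $\mathfrak{sl}_2$ spanned by the triple $\{x_\alpha, x_{-\alpha}, \alpha^\vee\}$, with $v_\mu$ playing the role of a highest-weight vector. First I would extract the two consequences of the hypotheses. Since $\mu+\alpha$ is not a weight of $V$ we have $V_{\mu+\alpha}=0$, and because $x_\alpha\in\mathfrak{g}_\alpha$ maps $V_\mu$ into $V_{\mu+\alpha}$ (cf.\ Lemma~\ref{L-wact}), this forces $x_\alpha\cdot v_\mu=0$. Since $v_\mu\in V_\mu$, the coroot acts by the scalar $\alpha^\vee\cdot v_\mu=\langle\mu,\alpha^\vee\rangle\,v_\mu=n\,v_\mu$. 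Thus $v_\mu$ is annihilated by $x_\alpha$ and is an $\alpha^\vee$-eigenvector of eigenvalue $n$; as $V$ is integrable, $x_{\pm\alpha}$ act locally nilpotently and $v_\mu$ generates a finite-dimensional $\mathfrak{sl}_2$-submodule, so the standard structure theory applies.

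Next I would set $v_k:=x_{-\alpha}^{[k]}\cdot v_\mu$ for $0\le k\le n$, so that $v_0=v_\mu$ and the assertion becomes $x_\alpha^{[n]}\cdot v_n=v_\mu$. The key step is the raising relation
$$x_\alpha\cdot v_k=(n-k+1)\,v_{k-1}.$$
I would obtain this from the enveloping-algebra identity $[x_\alpha,\,x_{-\alpha}^{\,k}]=k\,x_{-\alpha}^{\,k-1}\bigl(\alpha^\vee-(k-1)\bigr)$, itself proved by an easy induction on $k$ from $[x_\alpha,x_{-\alpha}]=\alpha^\vee$ and $[\alpha^\vee,x_{-\alpha}]=-2x_{-\alpha}$, by applying it to $v_\mu$, using $x_\alpha\cdot v_\mu=0$ and $\alpha^\vee\cdot v_\mu=n\,v_\mu$, and then dividing by $k!$.

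Finally, iterating the raising relation gives $x_\alpha^{\,j}\cdot v_n=j!\,v_{n-j}$ by a short induction on $j$; specializing to $j=n$ yields $x_\alpha^{\,n}\cdot v_n=n!\,v_0=n!\,v_\mu$, so that $x_\alpha^{[n]}x_{-\alpha}^{[n]}\cdot v_\mu=x_\alpha^{[n]}\cdot v_n=v_\mu$, as required. I do not expect a genuine obstacle here: the content is the classical finite-dimensional $\mathfrak{sl}_2$-module computation, and the only care needed is bookkeeping the divided-power normalizations and checking the commutator identity. Equivalently, one may phrase the whole argument through the standard basis $v_k$ of the irreducible $(n+1)$-dimensional $\mathfrak{sl}_2$-module, where the relations $x_\alpha\cdot v_k=(n-k+1)v_{k-1}$ and $x_{-\alpha}\cdot v_k=(k+1)v_{k+1}$ are well known.
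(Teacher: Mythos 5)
Your proof is correct and follows essentially the same route as the paper's: both arguments rest on the commutator identity $[x_{\alpha},x_{-\alpha}^{\,k}]=k\,x_{-\alpha}^{\,k-1}\bigl(\alpha^{\vee}-(k-1)\bigr)$ together with the observations $x_{\alpha}\cdot v_{\mu}=0$ and $\alpha^{\vee}\cdot v_{\mu}=n\,v_{\mu}$. The only difference is bookkeeping: the paper inducts directly on $x_{\alpha}^{\,m}x_{-\alpha}^{\,m}\cdot v_{\mu}=m!\prod_{j=0}^{m-1}(n-j)\,v_{\mu}$, whereas you organize the same computation via the divided-power vectors $v_{k}$ and the standard $\mathfrak{sl}_2$ raising relation.
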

\begin{proof}
First we prove that
$$[x_{\alpha},{x_{-\alpha}}^{k}]=k{x_{-\alpha}}^{k-1}\left(\alpha^{\vee}-(k-1)\right)$$
by induction on $k\in\N$:
\begin{align*}
[x_\a,{x_{-\a}}^k] &= x_\a {x_{-\a}}^k -{x_{-\a}}^k x_\a \\
&=x_\a{x_{-\a}}^k - {x_{-\a}}^{k-1}x_\a{x_{-\a}} +{x_{-\a}}^{k-1}x_\a{x_{-\a}}   -{x_{-\a}}^k x_\a\\
&= [x_\a,{x_{-\a}}^{k-1}] {x_{-\a}} + {x_{-\a}}^{k-1} [x_\a,{x_{-\a}}]\\
&=  (k-1) {x_{-\alpha}}^{k-2}\left(\alpha^{\vee}-k+2 \right){x_{-\a}} 
+ {x_{-\alpha}}^{k-1} \alpha^{\vee}\\
&= (k-1) {x_{-\alpha}}^{k-2}(\alpha^{\vee}x_{-\a}) -(k-1)(k-2){x_{-\alpha}}^{k-1} + {x_{-\alpha}}^{k-1} m\alpha^{\vee}\\
&= (k-1) {x_{-\alpha}}^{k-2}(x_{-\a}\alpha^\vee-2x_{-\a}) -(k-1)(k-2){x_{-\alpha}}^{k-1} + {x_{-\alpha}}^{k-1} \alpha^{\vee}\\
&= k {x_{-\alpha}}^{k-1}\alpha^\vee -k(k-1){x_{-\alpha}}^{k-1}.
\end{align*}

Note that $x_\a\cdot v_\mu\in V_{\mu+\a}=0$, so $x_\a\cdot v_\mu=0$.
Also $\a^\vee \cdot v_\mu=\langle\mu,\alpha^{\vee}\rangle v_\mu=nv_\mu$.
Next we prove
\begin{eqnarray*}
{x_{\alpha}}^{m}{x_{-\alpha}}^{m}\cdot v_{\mu}=m!\prod_{j=0}^{m-1}(n-j)\cdot v_{\mu}.
\end{eqnarray*}
by induction on $m$:
\begin{align*}
{x_{\alpha}}^{m}{x_{-\alpha}}^{m}\cdot  v_{\mu}
&={x_\a}^{m-1}\left([x_{\alpha}, {x_{-\alpha}}^{m}]+{x_{-\alpha}}^{m}x_{\alpha}\right)\cdot  v_{\mu}\\
&= {x_\a}^{m-1}[x_{\alpha}, {x_{-\alpha}}^{m}]\cdot  v_{\mu}\\
&= {x_{\alpha}}^{m-1}{x_{-\alpha}}^{m-1} m\left(\a^\vee-(m-1)\right)\cdot  v_{\mu}\\
&= m\left(n-(m-1)\right) {x_{\alpha}}^{m-1}{x_{-\alpha}}^{m-1}\cdot  v_{\mu}\\
&= m!\prod_{j=0}^{m-1}(n-j)\cdot v_{\mu}.
\end{align*}
Now take $m=n$ to get
${x_{\alpha}}^{n}{x_{-\alpha}}^{n}\cdot  v_{\mu} = n!\,n!\,v_\mu$, so $x_{\alpha}^{[n]} x_{-\alpha}^{[n]} \cdot v_{\mu} = v_\mu$.
\end{proof}

\begin{lemma}[Lemma~7.2 of \cite{St}]\label{hwt20}
Let $\a\in \Delta^{\re}$ and $t\in\Q$.
If $\mu$ is a weight of $V$ and $v\in V_{\mu}$, then
\[
\chi_\alpha(t)\cdot v=v+\sum_{m\in \mathbb{N}} t^m (x_\a^{[m]}\cdot v),
\]
where $x_\a^{[m]}\cdot v \in V_{\mu+m\alpha}$, and only finitely many of the terms $x_\a^{[m]}\cdot v$ are non-zero.
If moreover $v\in V_{\mu,\Z}$, then $x_\a^{[m]}\cdot v \in V_{\mu+m\alpha,\Z}$.
\end{lemma}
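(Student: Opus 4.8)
The plan is to read off the formula directly from the definition of $\chi_\alpha(t)$ and then to justify the two substantive assertions — that only finitely many terms survive, and that the terms are integral — by invoking local nilpotency of $x_\alpha$ and Lemma~\ref{L-wact} respectively. Concretely, I would start from $\chi_\alpha(t)=\exp(t\rho(x_\alpha))=\sum_{m\ge0}\tfrac{t^m\rho(x_\alpha)^m}{m!}$. Since $x_\alpha^{[m]}=x_\alpha^m/m!$, the operator $\tfrac{\rho(x_\alpha)^m}{m!}$ is exactly the action of $x_\alpha^{[m]}$ on $V$, so $\chi_\alpha(t)\cdot v=\sum_{m\ge 0}t^m(x_\alpha^{[m]}\cdot v)$. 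The $m=0$ term is $v$, which gives the stated identity with the remaining sum indexed by $m\ge 1$.

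Next I would verify the weight shift. As $x_\alpha\in\mathfrak{g}_\alpha$, for $h\in\mathfrak{h}$ and $v\in V_\mu$ the bracket relation gives
\[
\rho(h)\rho(x_\alpha)v=\rho([h,x_\alpha])v+\rho(x_\alpha)\rho(h)v=\bigl(\alpha(h)+\mu(h)\bigr)\rho(x_\alpha)v,
\]
so $\rho(x_\alpha)$ carries $V_\mu$ into $V_{\mu+\alpha}$; iterating $m$ times and dividing by $m!$ yields $x_\alpha^{[m]}\cdot v\in V_{\mu+m\alpha}$. For finiteness, I would use that $x_\alpha$ acts locally nilpotently on $V$: writing $\alpha=w\alpha_i$ and $x_\alpha=\Ad(\widetilde w)(e_i)$, the standard compatibility $\rho(\Ad(\widetilde w)e_i)=\widetilde w\,\rho(e_i)\,\widetilde w^{-1}$ gives $\rho(x_\alpha)^m=\widetilde w\,\rho(e_i)^m\,\widetilde w^{-1}$; since $e_i$ acts locally nilpotently by integrability and $\widetilde w$ is invertible, $\rho(x_\alpha)^m\cdot v=0$ for $m$ large. (This is precisely what makes $\chi_\alpha(t)$ a well-defined element of $\GL(V)$.) Hence only finitely many $x_\alpha^{[m]}\cdot v$ are nonzero.

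For the integral refinement, assume $v\in V_{\mu,\Z}=V_\mu\cap V_\Z$. Lemma~\ref{L-wact}(c) gives $x_\alpha^{[m]}\in\mathcal{U}_\Z$, and since $V_\Z=\mathcal{U}_\Z\cdot v_\lambda$ is a $\mathcal{U}_\Z$-submodule of $V$, we obtain $x_\alpha^{[m]}\cdot v\in\mathcal{U}_\Z\cdot V_\Z\subseteq V_\Z$. Combining this with the weight shift yields $x_\alpha^{[m]}\cdot v\in V_{\mu+m\alpha}\cap V_\Z=V_{\mu+m\alpha,\Z}$, as required.

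I expect no genuine obstacle: the result is essentially a reorganization of the definitions together with Lemma~\ref{L-wact}. The only mildly delicate point is the finiteness claim, which rests on local nilpotency of $x_\alpha$ for an arbitrary real root $\alpha$; but this is forced by integrability via the conjugation identity above, and is in any case already implicit in the well-definedness of $\chi_\alpha(t)$, so it requires no new work.
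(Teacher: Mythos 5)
Your proof is correct and follows essentially the same route as the paper: the expansion and weight shift are read off from the definition of $\chi_\alpha(t)$ (the paper defers these to Steinberg's Lemma~7.2), and the integrality claim is obtained from Lemma~\ref{L-wact}(c), which is exactly the one step the paper's own proof spells out. Your additional details on local nilpotency via conjugation by $\widetilde{w}$ are accurate but not a departure in method.
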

\begin{proof}
If $v\in V_{\mu, \Z}$, 
then $x_\a^{[m]}\cdot v\in V_{\mu+m\alpha,\Z}$ by Lemma~\ref{L-wact}(\ref{L-wact-onx}).
\end{proof}


\begin{proposition}\label{P-onetermstrong}
Let $\alpha\in\Delta^{\re}$.
If $\chi_\a(t) \widetilde{w}_\a\cdot  v_{\lambda}\in V_{\mathbb{Z}}$, then $\chi_\a(t)\in U_\a({\mathbb{Z}})$.
\end{proposition}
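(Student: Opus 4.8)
The plan is to read off $t$ from a single extreme weight component of $\chi_\a(t)\,\widetilde{w}_\a\cdot v_\lambda$, namely the one lying in the top weight space $V_\lambda$. Throughout set $n:=\langle\lambda,\alpha^\vee\rangle$; since $\lambda$ is dominant and regular this is nonzero, and I work in the substantive case $n>0$ (a positive real root $\alpha$), which is the one needed for Theorem~\ref{T-main}. By Lemma~\ref{L-wact} we have $\widetilde{w}_\a\cdot v_\lambda = v_{w_\a\lambda}$, a generator of the rank-one lattice $V_{w_\a\lambda,\Z}=\Z v_{w_\a\lambda}$, where $w_\a\lambda=\lambda-n\alpha$.

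First I would expand the root-group action by Lemma~\ref{hwt20}:
$$\chi_\a(t)\cdot v_{w_\a\lambda}=v_{w_\a\lambda}+\sum_{m\ge 1}t^m\,\bigl(x_\a^{[m]}\cdot v_{w_\a\lambda}\bigr),\qquad x_\a^{[m]}\cdot v_{w_\a\lambda}\in V_{w_\a\lambda+m\alpha,\Z}.$$
Since $V=\bigoplus_\mu V_\mu$ and $V_{\mu,\Z}=V_\mu\cap V_\Z$, membership in $V_\Z$ can be tested one weight space at a time. The key observation is that $w_\a\lambda+m\alpha=\lambda$ holds precisely for $m=n$, so the component of $\chi_\a(t)\cdot v_{w_\a\lambda}$ in $V_\lambda$ is exactly $t^n\,x_\a^{[n]}\cdot v_{w_\a\lambda}$, with no other term of the sum contributing to weight $\lambda$.

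The heart of the argument is to show that $x_\a^{[n]}\cdot v_{w_\a\lambda}$ is a \emph{unit} multiple of $v_\lambda$. On one hand $x_\a^{[n]}\in\mathcal U_\Z$ gives $x_\a^{[n]}\cdot v_{w_\a\lambda}\in V_{\lambda,\Z}=\Z v_\lambda$, say $x_\a^{[n]}\cdot v_{w_\a\lambda}=k'v_\lambda$ with $k'\in\Z$; likewise $x_{-\a}^{[n]}\cdot v_\lambda=k\,v_{w_\a\lambda}$ for some $k\in\Z$, using the one-dimensionality of the extreme weight space $V_{w_\a\lambda}$. On the other hand, Lemma~\ref{L-xx} applied with $\mu=\lambda$ (legitimate since $\lambda+\alpha$ is not a weight and $n>0$) gives $x_\a^{[n]}x_{-\a}^{[n]}\cdot v_\lambda=v_\lambda$. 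Composing these, $kk'=1$, so $k'=\pm1$ and $x_\a^{[n]}\cdot v_{w_\a\lambda}=\pm v_\lambda$.

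Combining the last two paragraphs, the $V_\lambda$-component of $\chi_\a(t)\,\widetilde{w}_\a\cdot v_\lambda$ equals $\pm t^n v_\lambda$. If this vector lies in $V_\Z$, then its top component lies in $V_{\lambda,\Z}=\Z v_\lambda$, forcing $t^n\in\Z$; since $t\in\Q$ and $\Z$ is integrally closed in $\Q$, this yields $t\in\Z$, i.e. $\chi_\a(t)\in U_\a(\Z)$. The one genuinely non-formal step is the middle one: upgrading ``$x_\a^{[n]}\cdot v_{w_\a\lambda}$ is an integer multiple of $v_\lambda$'' to ``it is $\pm v_\lambda$'', which is precisely what the commutator identity of Lemma~\ref{L-xx} secures through the relation $kk'=1$. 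Everything else is bookkeeping over the weight grading together with the integral closedness of $\Z$ at the very end.
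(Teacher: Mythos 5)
Your proof is correct and follows essentially the same route as the paper's: isolate the $V_\lambda$-component of $\chi_\a(t)\cdot\widetilde{w}_\a v_\lambda$, use $V_{\lambda,\Z}=\Z v_\lambda$ and $V_{w_\a\lambda,\Z}=\Z v_{w_\a\lambda}$, and invoke Lemma~\ref{L-xx} to pin down $t^n$. Your observation that $kk'=1$ forces $x_\a^{[n]}\cdot v_{w_\a\lambda}=\pm v_\lambda$ is a marginally cleaner packaging of the paper's step (which divides by the integer $m$ with $x_{-\a}^{[n]}\cdot v_\lambda=mv_0$), but the substance is identical.
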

\begin{proof}
Let $\a$ be positive. Suppose $\chi_\alpha(t)\cdot v_0\in V_\Z$ where  $v_0:=\widetilde{w}_\a\cdot v_\lambda$.
Let  $n=\langle \lambda, \alpha^{\vee}\rangle$ so that $w_\a\lambda =\lambda-n\a$.
Note that $n>0$ since $\lambda$ is regular.
Now $V_{\lambda-n\a,\Z}= \Z v_0$ by Lemma~\ref{L-wact}(\ref{L-wact-onVlambda}).
But $x_{-\a}^{[n]}\cdot v_\lambda$ is also in $V_{\lambda-n\a,\Z}$
by Lemma~\ref{L-wact}(\ref{L-wact-onx}), so $x_{-\a}^{[n]}\cdot v_\lambda = mv_0$
for some integer $m$.
We have
$$
\chi_\alpha(t) \cdot v_0= v_0+tx_{\alpha}\cdot v_0+t^2x_{\a}^{[2]}\cdot v_0 +\dots+ t^nx_{\a}^{[n]}\cdot v_0.
$$

Now consider the $V_{\lambda,\Z}$ component:
$$t^nx_{\a}^{[n]}\cdot v_0 = \frac{t^n}{m} x_{\a}^{[n]}x_{-\a}^{[n]}\cdot v_\lambda
= \frac{t^n}{m} v_\lambda$$
by Lemma~\ref{L-xx}.
Hence
$\frac{t^n}{m}$ is an integer, so $t^n$ is an integer, and so $t$ is an integer.
The proof for negative $\a$ is similar.
\end{proof}

\newpage
Since $U_\a(\Z)\subseteq U_\a\cap G(\Z)$, we get:
\begin{cor}\label{C-onetermstrong}\label{C-oneterm}
If $\alpha\in\Delta^{\re}$, then $U_\a$ 
is strongly integral with respect to $\widetilde{w}_\a\cdot v_\lambda$.
Hence $U_\a$ is integral.
\end{cor}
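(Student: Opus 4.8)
The plan is to read this corollary off directly from Proposition~\ref{P-onetermstrong}, the containment $U_\a(\Z)\subseteq G(\Z)$, and Lemma~\ref{Strong}; almost all of the genuine work has already been carried out in proving the proposition, so what remains is short bookkeeping. Concretely, strong integrality of $U_\a$ with respect to $v_0:=\widetilde{w}_\a\cdot v_\lambda$ asks that, for every $g\in U_\a$, the condition $g\cdot v_0\in V_\Z$ forces $g\in G(\Z)$, and every element of $U_\a$ has the form $g=\chi_\a(t)$ for some $t\in\Q$.

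First I would confirm that the definition of strong integrality even applies with base vector $v_0=\widetilde{w}_\a\cdot v_\lambda$, i.e.\ that $v_0\in V_\Z$. Since $\widetilde{w}_\a$ is a product of the standard lifts $\widetilde{w}_i$, each of which lies in $G(\Z)\subseteq\Gamma(\Z)$, we have $\widetilde{w}_\a\cdot V_\Z=V_\Z$; as $v_\lambda\in V_\Z$, this gives $v_0\in V_\Z$ as required.

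Next I would take an arbitrary $g\in U_\a$, write $g=\chi_\a(t)$ with $t\in\Q$, and suppose $g\cdot v_0=\chi_\a(t)\widetilde{w}_\a\cdot v_\lambda\in V_\Z$. Proposition~\ref{P-onetermstrong} then yields $\chi_\a(t)\in U_\a(\Z)$, and the containment $U_\a(\Z)\subseteq U_\a\cap G(\Z)$ recorded after the definition of $U_\Omega(\Z)$ gives $g=\chi_\a(t)\in G(\Z)$. This is precisely strong integrality of $U_\a$ with respect to $\widetilde{w}_\a\cdot v_\lambda$. The final assertion, that $U_\a$ is integral, then follows immediately from Lemma~\ref{Strong}, which upgrades strong integrality to integrality.

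The honest answer to where the difficulty lies is that it is \emph{not} in this corollary: the real obstacle was already cleared in Proposition~\ref{P-onetermstrong}, whose proof hinges on the identity $x_\a^{[n]}x_{-\a}^{[n]}\cdot v_\lambda=v_\lambda$ of Lemma~\ref{L-xx} to isolate the $V_{\lambda,\Z}$-component of $\chi_\a(t)\cdot v_0$ and thereby extract integrality of $t$. Granting those inputs, the corollary reduces to assembling three already-established facts, and the only point requiring a moment's care is the verification that $v_0\in V_\Z$ so that the notion of strong integrality is meaningfully instantiated.
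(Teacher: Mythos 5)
Your argument is correct and is exactly the paper's route: the corollary is read off from Proposition~\ref{P-onetermstrong} together with the containment $U_\a(\Z)\subseteq U_\a\cap G(\Z)$, and the integrality claim follows from Lemma~\ref{Strong}. The extra check that $\widetilde{w}_\a\cdot v_\lambda\in V_\Z$ is a reasonable (if routine) addition that the paper leaves implicit.
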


It follows immediately that for $\alpha\in\Delta^{\re}$ and $u\in U(\Z)$, if $\chi_{\alpha}(s) u\in \Gamma(\Z)$ then $s\in\Z$. 


\section{Inversion sets and subgroups}

\subsection{Inversion sets and orderings}\label{SS-inv}
Let $w\in W$. We define the corresponding \emph{inversion set} of roots by
\begin{eqnarray*}
 \Phi_{(w)}&=&\{\beta\in \Delta_{+}\mid w^{-1}\beta\in \Delta_{-}\}=\Delta_{+}\cap w(\Delta_{-}).
 \end{eqnarray*}
We note the following standard properties of  $\Phi_{(w)}$, which can be proven by induction on $\ell(w)$.
\begin{lemma}\label{Invsets}
Suppose that $w\in W$ has the reduced expression $w_{i_1}w_{i_2}\cdots w_{i_k}$.
\begin{enumerate}[\hspace{.5cm}(a) ]
\item $\Phi_{(w)}=\{\alpha_{i_1}, \;w_{i_1}\alpha_{i_2}, \;w_{i_1}w_{i_2}\alpha_{i_3},\;\dots,\; w_{i_1}w_{i_2}\cdots w_{i_{k-2}}w_{i_{k-1}}\alpha_{k}\}\subseteq\Delta_{+}^{\re}$.
\item $\Phi_{(w)}$ has cardinality $k=\ell(w)$. 
\item If $w'=w_{i_1}w_{i_2}\cdots w_{i_{k-1}}$, then $\Phi_{(w)}=\Phi_{(w')}\sqcup \{w'\alpha_{i_k}\}.$
\item\label{Invsets-decomp} If ${w}''=w_{i_2}\cdots w_{i_k}$, then $\Phi_{(w)}=\{\alpha_{i_1}\}\sqcup {w_{i_1}}\Phi_{({w''})}$.
\item For $\mu\in \mathfrak{h}^{\ast}$,
$w\mu=\mu-\langle \mu,\alpha_{i_{1}}\rangle \alpha_{i_{1}}-\langle \mu,\alpha_{i_{2}}\rangle w_{i_{1}}\alpha_{i_{2}}-\dots-\langle \mu,\alpha_{i_{k}}\rangle w_{i_{1}}w_{i_{2}}\cdots w_{i_{k-1}}\alpha_{i_{k}}.$
\end{enumerate}
\end{lemma}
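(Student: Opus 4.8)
The plan is to prove all five assertions of Lemma~\ref{Invsets} simultaneously by induction on $k=\ell(w)$, taking part~(c) as the main engine and reading off the explicit description and the remaining parts from it. The base case $k\le 1$ is immediate: for $w=1$ both sides are empty, while for $w=w_{i_1}$ one invokes the standard fact that a simple reflection sends $\alpha_{i_1}$ to $-\alpha_{i_1}$ and permutes $\Delta_+\setminus\{\alpha_{i_1}\}$ among itself, so that $\Phi_{(w_{i_1})}=\{\alpha_{i_1}\}$; parts (b)--(d) are then trivial and (e) is just the defining formula for $w_{i_1}$.

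The heart of the matter is part~(c). I would write $w=w'w_{i_k}$ with $w'=w_{i_1}\cdots w_{i_{k-1}}$, reduced of length $k-1$; since the expression for $w$ is reduced, the length criterion $\ell(w'w_{i_k})>\ell(w')$ forces $w'\alpha_{i_k}\in\Delta_+$ (in fact in $\Delta_+^{\re}$). For a positive root $\beta$ I would test membership in $\Phi_{(w)}$ through $w^{-1}\beta=w_{i_k}(w')^{-1}\beta$, splitting into cases according to the sign of $(w')^{-1}\beta$ and using repeatedly that $w_{i_k}$ interchanges $\alpha_{i_k}$ and $-\alpha_{i_k}$ while preserving $\Delta_\pm\setminus\{\pm\alpha_{i_k}\}$. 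This shows that $w'\alpha_{i_k}\in\Phi_{(w)}$, that every $\beta\in\Phi_{(w')}$ lies in $\Phi_{(w)}$, and that no other positive root does; disjointness is immediate since $(w')^{-1}(w'\alpha_{i_k})=\alpha_{i_k}\in\Delta_+$ puts $w'\alpha_{i_k}\notin\Phi_{(w')}$. Together these give $\Phi_{(w)}=\Phi_{(w')}\sqcup\{w'\alpha_{i_k}\}$.

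With (c) in hand the rest is bookkeeping. Part~(a) follows by iterating the recursion against the inductive description of $\Phi_{(w')}$, using $w'=w_{i_1}\cdots w_{i_{k-1}}$; each listed root is a $W$-translate of a simple root, hence lies in $\Delta_+^{\re}$. Part~(b) is then immediate, since the disjoint union gives $|\Phi_{(w)}|=|\Phi_{(w')}|+1=k$ and reducedness gives $k=\ell(w)$, which in particular shows the $k$ roots in (a) are distinct. For (d) I would observe that $w''=w_{i_2}\cdots w_{i_k}$ is reduced of length $k-1$, apply (a) to $w''$, act by $w_{i_1}$, and compare termwise with the list for $w$ furnished by (a); the prepended $\{\alpha_{i_1}\}$ is disjoint from $w_{i_1}\Phi_{(w'')}$ by the cardinality count (this is the cocycle identity $\Phi_{(uv)}=\Phi_{(u)}\sqcup u\Phi_{(v)}$ specialized to $u=w_{i_1}$).

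Finally, (e) is independent of the inversion-set combinatorics and I would prove it by telescoping: writing $\mu-w\mu=\sum_{j=1}^{k}\bigl(w_{i_1}\cdots w_{i_{j-1}}\mu-w_{i_1}\cdots w_{i_j}\mu\bigr)$ and substituting $\mu-w_{i_j}\mu=\langle\mu,\alpha_{i_j}^\vee\rangle\alpha_{i_j}$ collapses each summand to $\langle\mu,\alpha_{i_j}^\vee\rangle\,w_{i_1}\cdots w_{i_{j-1}}\alpha_{i_j}$, giving exactly the stated formula. The main obstacle is part~(c): everything rests on the two standard but essential facts about real roots in the Kac--Moody setting, namely that a simple reflection permutes $\Delta_+\setminus\{\alpha_i\}$ while flipping $\alpha_i$, and that a reduced last letter forces $w'\alpha_{i_k}\in\Delta_+$; once these are invoked (see \cite{Kac90}) the sign analysis is routine and the remaining parts are formal consequences.
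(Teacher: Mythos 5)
Your proof is correct and follows exactly the route the paper indicates: it states these as standard properties ``which can be proven by induction on $\ell(w)$'' and omits the details, which your argument supplies accurately (part~(c) via the fact that $w_{i_k}$ permutes $\Delta_+\setminus\{\alpha_{i_k}\}$ and that reducedness forces $w'\alpha_{i_k}\in\Delta_+$, the other parts as formal consequences, and (e) by telescoping). No gaps.
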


 We also note the immediate consequences of Lemma~\ref{Invsets}:
 \begin{lemma}\label{invdec}$\;$
\begin{enumerate}[\hspace{.5cm}(a) ]
\item For $\alpha\in \Delta_{+}^{\re}$, there exists $w\in W$ such that $\alpha\in \Phi_{(w)}$.
\item For a finite subset $\Omega\subseteq  \Delta^{\re}_{+}$, there exists a finite set $T\subseteq W$ such that
 $\Omega\subseteq\bigcup_{w\in T}\Phi_{w}.$
\end{enumerate}
\end{lemma}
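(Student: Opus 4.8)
The plan is to prove part~(a) first, since part~(b) follows at once by applying (a) to each element of $\Omega$ and taking a finite union. For part~(a) the key observation is to use the reflection $w_\alpha$ in the root $\alpha$ itself, rather than trying to build a reduced word by hand. Recall from Subsection~\ref{RWG} that for $\alpha\in\Delta_+^{\re}$ we may write $w_\alpha = w\,w_i\,w^{-1}\in W$, where $w\alpha=\alpha_i$; in particular $w_\alpha$ is an involution and $w_\alpha\alpha=-\alpha$. I would simply take $w=w_\alpha$. Since $w_\alpha^{-1}=w_\alpha$, we get $w_\alpha^{-1}\alpha = w_\alpha\alpha = -\alpha\in\Delta_-$, while $\alpha\in\Delta_+$ by hypothesis. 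By the definition of the inversion set in Subsection~\ref{SS-inv} this says precisely that $\alpha\in\Phi_{(w_\alpha)}$, which proves (a).

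As an alternative route that makes the dependence on Lemma~\ref{Invsets} explicit, one can instead write $\alpha=w'\alpha_i$ with $\ell(w'w_i)=\ell(w')+1$, choose a reduced word $w_{i_1}\cdots w_{i_{k-1}}$ for $w'$, and set $w:=w'w_i$, which is then reduced of length $k$. By Lemma~\ref{Invsets}(a) the last root listed in $\Phi_{(w)}$ is $w_{i_1}\cdots w_{i_{k-1}}\alpha_i = w'\alpha_i=\alpha$, so again $\alpha\in\Phi_{(w)}$. I prefer the reflection argument because it sidesteps the bookkeeping needed to verify the reducedness condition $\ell(w'w_i)=\ell(w')+1$.

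For part~(b), finiteness does all the work: applying (a) to each $\alpha\in\Omega$ produces an element $w_\alpha\in W$ with $\alpha\in\Phi_{(w_\alpha)}$, and setting $T=\{w_\alpha\mid\alpha\in\Omega\}$ gives a finite subset of $W$ with $\Omega\subseteq\bigcup_{\alpha\in\Omega}\Phi_{(w_\alpha)}=\bigcup_{w\in T}\Phi_{(w)}$.

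I do not expect a serious obstacle here; this is a structural lemma whose only subtle point is confirming that $w_\alpha$ genuinely lies in $W$ and is an involution in the Kac--Moody setting, which is exactly what the conjugacy $w_\alpha=w\,w_i\,w^{-1}$ recorded in Subsection~\ref{RWG} guarantees. Everything else is formal.
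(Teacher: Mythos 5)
Your proof is correct. The paper offers no written proof at all---it records the lemma as an ``immediate consequence'' of Lemma~\ref{Invsets}, so the intended argument is evidently your second route: write $\alpha=w'\alpha_i$, observe that appending $w_i$ to a reduced word for $w'$ gives a reduced word for $w:=w'w_i$, and read off $\alpha$ as the last entry of $\Phi_{(w)}$ from Lemma~\ref{Invsets}(a). Your primary argument via the reflection $w_\alpha$ is genuinely different and arguably cleaner: it uses only the definition $\Phi_{(w)}=\Delta_+\cap w(\Delta_-)$ together with $w_\alpha\alpha=-\alpha$ and the fact (recorded in Subsection~\ref{RWG}) that $w_\alpha=w w_i w^{-1}\in W$, and it never touches reduced words or Lemma~\ref{Invsets}. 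One small remark on your stated reason for preferring it: the ``bookkeeping'' in the reduced-word route is lighter than you suggest, since the condition $\ell(w'w_i)=\ell(w')+1$ is equivalent to $w'\alpha_i\in\Delta_+$, which holds automatically because $w'\alpha_i=\alpha$ is positive by hypothesis; so both routes are essentially one-line arguments. Part~(b) by taking the finite union $T=\{w_\alpha\mid\alpha\in\Omega\}$ is exactly what the paper intends.
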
 

We equip $\Phi_{(w)}$ with an ordering
$$\alpha_{i_1} \prec w_{i_1}\alpha_{i_2} \prec w_{i_1}w_{i_2}\alpha_{i_3}\prec\cdots\prec w_{i_1}w_{i_2}\cdots w_{i_{k-2}}w_{i_{k-1}}\alpha_{k},$$
as described by Papi in \cite{P}. Note that this ordering depends of the choice of reduced word for~$w$.

\subsection{Inversion subgroups}\label{invgrp}
For $w\in W$, the {\it inversion subgroup}  is defined as 
$$U_{(w)}=U_{\Phi_{(w)}}=\langle U_\a \mid \a\in\Phi_{(w)}\rangle.$$
\begin{lemma}\label{lemdec2} 
Each element $u_{ (w)}\in U_{(w)}$ can be written uniquely as 
$$u_{ (w)}=\prod_{\gamma\in \Phi_{w} }u_{\b},$$
where $u_\b\in U_\b$ and the product is in the ordering on $\Phi_{(w)}$.
\end{lemma}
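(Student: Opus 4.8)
The plan is to prove existence and uniqueness of the factorization simultaneously, by induction on $k=\ell(w)$, using the recursive structure of $\Phi_{(w)}$ recorded in Lemma~\ref{Invsets}(\ref{Invsets-decomp}). The base case $k=1$ is immediate: there $\Phi_{(w)}=\{\alpha_{i_1}\}$ and $U_{(w)}=U_{\alpha_{i_1}}$. For the inductive step write $w''=w_{i_2}\cdots w_{i_k}$ and set $N:=U_{\Phi_{(w)}\setminus\{\alpha_{i_1}\}}=U_{w_{i_1}\Phi_{(w'')}}$, so that $\Phi_{(w)}=\{\alpha_{i_1}\}\sqcup\{\gamma_2\prec\cdots\prec\gamma_k\}$ with $\gamma_j=w_{i_1}\cdots w_{i_{j-1}}\alpha_{i_j}$ and $\alpha_{i_1}$ the minimal element of the ordering. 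I would first establish the internal decomposition $U_{(w)}=U_{\alpha_{i_1}}\cdot N$ with $N$ normal and $U_{\alpha_{i_1}}\cap N=\{1\}$, and then transfer the factorization of $N$ from that of $U_{(w'')}$ by conjugation.

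\emph{Step 1 (commutator input).} The set $\Phi_{(w)}$ is finite and \emph{closed}: if $\alpha,\beta\in\Phi_{(w)}$ and $p\alpha+q\beta\in\Delta$ with $p,q>0$, then $p\alpha+q\beta$ is positive and $w^{-1}$ sends it to a positive combination of negative roots, so it again lies in $\Phi_{(w)}$. Hence every pair in $\Phi_{(w)}$ is prenilpotent, the Chevalley commutator relations apply inside $G(\Q)\le\GL(V)$, and every factor $\chi_{p\alpha_{i_1}+q\gamma_j}(\cdot)$ appearing in $[\chi_{\alpha_{i_1}}(s),\chi_{\gamma_j}(t)]$ lies in a root group $U_\delta$ with $\delta\in\Phi_{(w)}$. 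Moreover, for $j\ge2$ the support of $\gamma_j$ (its set of simple roots with nonzero coefficient) contains an index different from $i_1$, since the only positive real root supported on $\{\alpha_{i_1}\}$ alone is $\alpha_{i_1}$ itself; therefore any root $p\alpha_{i_1}+q\gamma_j$ with $q\ge1$ lies in $\Phi_{(w)}\setminus\{\alpha_{i_1}\}$. Consequently $[U_{\alpha_{i_1}},U_{\gamma_j}]\subseteq N$ for all $j$, so $N$ is normalized by $U_{\alpha_{i_1}}$, whence $N$ is a normal subgroup of $U_{(w)}=\langle U_{\alpha_{i_1}},N\rangle$ and $U_{(w)}=U_{\alpha_{i_1}}\cdot N$.

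\emph{Step 2 (trivial intersection; the crux).} I expect the main obstacle to be injectivity, i.e.\ the identity $U_{\alpha_{i_1}}\cap N=\{1\}$. To prove it I would test on $v_{w_{i_1}\lambda}=\widetilde{w}_{i_1}\cdot v_\lambda$, which spans the one-dimensional weight space $V_{w_{i_1}\lambda}$ (Lemma~\ref{L-wact}(\ref{L-wact-onVlambda})), where $w_{i_1}\lambda=\lambda-n_1\alpha_{i_1}$ and $n_1=\langle\lambda,\alpha_{i_1}^\vee\rangle>0$ by regularity of $\lambda$. By Lemma~\ref{hwt20}, $\chi_{\alpha_{i_1}}(t)\cdot v_{w_{i_1}\lambda}=v_{w_{i_1}\lambda}+t\,x_{\alpha_{i_1}}\cdot v_{w_{i_1}\lambda}+\cdots$, whose degree-one term lies in $V_{w_{i_1}\lambda+\alpha_{i_1}}$ and is nonzero, since $x_{\alpha_{i_1}}=e_{i_1}$ applied to the bottom $v_{w_{i_1}\lambda}$ of the nontrivial $\mathfrak{sl}_2^{(i_1)}$-string through $v_\lambda$ is a nonzero multiple of $f_{i_1}^{\,n_1-1}\cdot v_\lambda$. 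On the other hand, any $n\in N$ raises weights only by nonnegative combinations $\sum_{j\ge2}m_j\gamma_j$, and such a combination equals $\alpha_{i_1}$ only when it is zero, because each $\gamma_j$ ($j\ge2$) contributes a positive coefficient on some simple root $\ne\alpha_{i_1}$ and these cannot cancel. Thus $n\cdot v_{w_{i_1}\lambda}$ has vanishing $V_{w_{i_1}\lambda+\alpha_{i_1}}$-component. Comparing this component in the hypothetical equation $\chi_{\alpha_{i_1}}(t)=n$ forces $t\,x_{\alpha_{i_1}}\cdot v_{w_{i_1}\lambda}=0$, hence $t=0$, giving $U_{\alpha_{i_1}}\cap N=\{1\}$.

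\emph{Step 3 (conclusion).} Combining Steps 1 and 2, $U_{(w)}$ is the internal semidirect product of $U_{\alpha_{i_1}}$ and the normal subgroup $N$, so every $g\in U_{(w)}$ factors uniquely as $g=u_{\alpha_{i_1}}\,n$ with $u_{\alpha_{i_1}}\in U_{\alpha_{i_1}}$ and $n\in N$. Since conjugation by Weyl lifts maps root groups to root groups ($\widetilde{w}_{i_1}U_\beta\widetilde{w}_{i_1}^{-1}=U_{w_{i_1}\beta}$), we have $\widetilde{w}_{i_1}U_{(w'')}\widetilde{w}_{i_1}^{-1}=U_{w_{i_1}\Phi_{(w'')}}=N$, and this isomorphism carries the Papi ordering on $\Phi_{(w'')}$ to the induced ordering $\gamma_2\prec\cdots\prec\gamma_k$. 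The inductive hypothesis applied to $U_{(w'')}$ then yields a unique ordered factorization $n=u_{\gamma_2}\cdots u_{\gamma_k}$. As $\alpha_{i_1}$ is the minimal element of $\Phi_{(w)}$, we obtain the unique ordered expression $g=u_{\alpha_{i_1}}u_{\gamma_2}\cdots u_{\gamma_k}$, completing the induction.
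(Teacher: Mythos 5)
The paper does not actually prove Lemma~\ref{lemdec2}: it is stated without proof, as a standard structural fact about inversion subgroups (of the kind found in Kumar's book or in the references \cite{P}, \cite{PP}). So there is nothing in the paper to compare line-by-line; what you have supplied is a genuine proof where the paper supplies none. Your argument is the standard one --- induction on $\ell(w)$ via the internal semidirect product $U_{(w)}=U_{\alpha_{i_1}}\ltimes \widetilde{w}_{i_1}U_{(w'')}\widetilde{w}_{i_1}^{-1}$, using Lemma~\ref{Invsets}(\ref{Invsets-decomp}) to match $\Phi_{(w)}\setminus\{\alpha_{i_1}\}$ with $w_{i_1}\Phi_{(w'')}$ and to transport the Papi ordering --- and the details check out. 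The closedness of $\Phi_{(w)}$ is argued correctly (and it also guarantees that every root in an interval $]\alpha_{i_1},\gamma_j[$ is real, so no imaginary root groups can appear), the support argument showing that no nonnegative combination of $\gamma_2,\dots,\gamma_k$ equals $\alpha_{i_1}$ is sound, and the trivial-intersection step via the $V_{w_{i_1}\lambda+\alpha_{i_1}}$-component of the action on $v_{w_{i_1}\lambda}$ is a nice touch that stays entirely within the paper's own toolkit (Lemmas~\ref{L-wact} and \ref{hwt20}, regularity of $\lambda$, and elementary $\mathfrak{sl}_2$-theory).

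The one substantive external input is in Step 1: you invoke the Chevalley commutator formula for a prenilpotent pair of real roots inside the representation-theoretic group $G(\Q)\le\GL(V)$. That formula is true and standard for these groups, but it is nowhere stated or proved in this paper, so your proof is not self-contained relative to the paper's stated results; you should cite it explicitly (e.g.\ Tits \cite{Ti87} or Carbone--Garland \cite{CG}) rather than saying the relations ``apply.'' With that citation added, the proof is complete and, arguably, an improvement on the paper, which leaves both the lemma and its corollary $U_{(w)}(\Z)=U_{(w)}\cap U(\Z)$ unjustified.
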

\begin{cor}
$U_{(w)}(\Z) = U_{(w)}\cap U(\Z)$. 
\end{cor}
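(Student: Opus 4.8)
The inclusion $U_{(w)}(\Z)\subseteq U_{(w)}\cap U(\Z)$ is immediate: since $\Phi_{(w)}\subseteq\Delta_+^{\re}$ we have $U_\beta(\Z)\subseteq U(\Z)$ for every $\beta\in\Phi_{(w)}$, so $U_{(w)}(\Z)\subseteq U(\Z)$, and trivially $U_{(w)}(\Z)\subseteq U_{(w)}$. For the reverse inclusion I would argue by induction on $k=\ell(w)$, the case $k=0$ being trivial. Fixing a reduced word $w=w_{i_1}\cdots w_{i_k}$, I would set $w'=w_{i_1}\cdots w_{i_{k-1}}$ and $\gamma=w'\alpha_{i_k}$, so that Lemma~\ref{Invsets}(c) gives $\Phi_{(w)}=\Phi_{(w')}\sqcup\{\gamma\}$ with $\gamma$ the largest root in Papi's ordering. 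Given $u\in U_{(w)}\cap U(\Z)$, Lemma~\ref{lemdec2} writes it uniquely as $u=u'\,\chi_\gamma(t_\gamma)$ with $u'\in U_{(w')}$ the product of the first $k-1$ factors. The goal is to show $t_\gamma\in\Z$; once this is known, $u'=u\,\chi_\gamma(t_\gamma)^{-1}\in U_{(w')}\cap U(\Z)$, and the inductive hypothesis yields $u'\in U_{(w')}(\Z)$, whence $u\in U_{(w)}(\Z)$.

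To isolate $t_\gamma$ I would evaluate $u$ on $v_{w\lambda}=\widetilde w\,v_\lambda\in V_\Z$; since $u\in U(\Z)\subseteq\Gamma(\Z)$ stabilizes $V_\Z$, we have $u\cdot v_{w\lambda}\in V_\Z$. Using $\gamma=w'\alpha_{i_k}$ to write $\chi_\gamma(t_\gamma)=\widetilde{w'}\,\chi_{\alpha_{i_k}}(t_\gamma)\,\widetilde{w'}^{-1}$ and $v_{w\lambda}=\widetilde{w'}\,v_{w_{i_k}\lambda}$, one finds, via Lemma~\ref{hwt20},
\[
u\cdot v_{w\lambda}=u'\,\widetilde{w'}\sum_{p=0}^{n}t_\gamma^{\,p}\,x_{\alpha_{i_k}}^{[p]}\cdot v_{w_{i_k}\lambda},\qquad n=\langle\lambda,\alpha_{i_k}^\vee\rangle>0.
\]
The computation in Proposition~\ref{P-onetermstrong} (through Lemma~\ref{L-xx}) shows that the top term satisfies $x_{\alpha_{i_k}}^{[n]}\cdot v_{w_{i_k}\lambda}=\tfrac1m v_\lambda$ for some nonzero integer $m$, so the $p=n$ summand contributes $\tfrac{t_\gamma^{\,n}}{m}\,u'\cdot v_{w'\lambda}$, whose $V_{w'\lambda}$-component is $\tfrac{t_\gamma^{\,n}}{m}\,v_{w'\lambda}$ because the positive root factors of $u'$ only raise the weight. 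Since $V_{w'\lambda,\Z}=\Z v_{w'\lambda}$ by Lemma~\ref{L-wact}(b), projecting $u\cdot v_{w\lambda}\in V_\Z$ onto this line gives $t_\gamma^{\,n}/m\in\Z$, hence $t_\gamma^{\,n}\in\Z$ and therefore $t_\gamma\in\Z$, as $t_\gamma\in\Q$.

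The delicate point, which I expect to be the main obstacle, is verifying that \emph{only} the $p=n$ summand reaches the weight space $V_{w'\lambda}$. The $p$-th summand lies in $V_{w\lambda+p\gamma}$ before $u'$ acts, and $u'$ can only add a nonnegative integer combination $\eta$ of the roots of $\Phi_{(w')}$; landing in $V_{w'\lambda}$ forces $\eta=(n-p)\gamma$, using $w\lambda=w'\lambda-n\gamma$. Applying $w^{-1}$ and the identity $w^{-1}\gamma=-\alpha_{i_k}$ converts $(n-p)\gamma=\sum_{\beta\in\Phi_{(w')}}c_\beta\beta$ into $(n-p)\alpha_{i_k}=\sum_\beta c_\beta\delta_\beta$ with each $\delta_\beta:=-w^{-1}\beta\in\Delta_+$; comparing simple-root coordinates then forces every $\delta_\beta$ with $c_\beta>0$ to equal the simple root $\alpha_{i_k}$, i.e. $\beta=\gamma$, which is excluded since $\gamma\notin\Phi_{(w')}$. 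Thus no $p<n$ contributes, the isolation holds, and the induction closes. This weight-isolation step is exactly where the convexity of $\Phi_{(w)}$ and Papi's ordering enter, mirroring the mechanism behind Theorem~\ref{T-main}.
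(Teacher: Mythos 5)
Your proof is correct, but it takes a genuinely different---and much heavier---route than the paper, which states this corollary with no proof at all, treating it as an immediate consequence of the unique ordered factorization in Lemma~\ref{lemdec2} (the inclusion $U_{(w)}(\Z)\subseteq U_{(w)}\cap U(\Z)$ is trivial, and the reverse is meant to be absorbed into normal-form bookkeeping for $U(\Z)$). What you have written is instead a self-contained representation-theoretic argument that reproduces the mechanism of Proposition~\ref{P-main}, i.e.\ the proof of Theorem~\ref{T-main}: induction on $\ell(w)$, isolation of an extreme weight component of $u\widetilde{w}\cdot v_\lambda$, the $\frak{sl}_2$-computation of Lemma~\ref{L-xx} to extract $t^n$, and a positivity argument under $w^{-1}$ to kill all other contributions. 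The structural difference is that you peel off the \emph{last} root $\gamma=w'\alpha_{i_k}$ of $\Phi_{(w)}$ and project onto $V_{w'\lambda}$, whereas Proposition~\ref{P-main} peels off the \emph{first} root $\beta_1=\alpha_{i_1}$ (which is simple) and projects onto $V_{w''\lambda}$; your weight-isolation step (applying $w^{-1}$, using that $-w^{-1}\beta\in\Delta_+$ for $\beta\in\Phi_{(w')}$ while $-w^{-1}\gamma=\alpha_{i_k}$ is simple, and that the only positive multiple of a real root that is a root is the root itself) is a correct mirror image of the paper's. Note that once Proposition~\ref{P-main} is available, the corollary follows in one line: $u\in U_{(w)}\cap U(\Z)\subseteq\Gamma(\Z)$ gives $u\widetilde{w}\cdot v_\lambda\in V_\Z$, hence $u\in U_{(w)}(\Z)$. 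So your argument buys independence from the later section at the cost of duplicating its main computation. Two small points to make explicit: $m\neq 0$ because $x_{\alpha_{i_k}}^{[n]}x_{-\alpha_{i_k}}^{[n]}\cdot v_\lambda=v_\lambda\neq 0$ by Lemma~\ref{L-xx}; and $\chi_\gamma(t)=\widetilde{w'}\chi_{\alpha_{i_k}}(\pm t)\widetilde{w'}^{-1}$ only up to the usual sign ambiguity in the choice of $x_\gamma$, which is harmless since integrality of $\pm t_\gamma$ and of $t_\gamma$ coincide.
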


For $w\in W$, set $U^{(w)}=U\cap w^{-1}Uw$. We end this section with the following lemma from Subsection~4.2.6 of  \cite{PP}.
\begin{lemma}\label{conlem1}
$
U=U^{(w)}U_{(w)}=U_{(w)}U^{(w)}.
$
\end{lemma}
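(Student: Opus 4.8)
The plan is to prove only the single factorization $U=U^{(w)}U_{(w)}$ and to obtain the reversed one $U=U_{(w)}U^{(w)}$ for free. Indeed $U$, the inversion subgroup $U_{(w)}$, and $U^{(w)}=U\cap w^{-1}Uw$ are all subgroups of $G(\Q)$, so taking inverses gives
$U=U^{-1}=(U^{(w)}U_{(w)})^{-1}=U_{(w)}^{-1}(U^{(w)})^{-1}=U_{(w)}U^{(w)}$.
Thus it suffices to produce one of the two displayed identities.

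The first substantive step is to pass to root data on both sides. Set $\Psi_{(w)}:=\Delta_+^{\re}\setminus\Phi_{(w)}$, so that $\Delta_+^{\re}=\Phi_{(w)}\sqcup\Psi_{(w)}$ with $\Psi_{(w)}=\{\gamma\in\Delta_+^{\re}\mid w^{-1}\gamma\in\Delta_+\}$. Using the conjugation action of $\widetilde{w}$ on root groups recorded in Subsection~\ref{generators} (which carries each $U_\gamma$ to the root group of the reflected root), a positive root group $U_\gamma$ lies in $w^{-1}Uw$ exactly when $\gamma\in\Psi_{(w)}$; one checks in this way that $U^{(w)}=U_{\Psi_{(w)}}$, so that $U_{(w)}$ and $U^{(w)}$ are precisely the subgroups attached to the two blocks of the partition. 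A second point I would record is that both $\Phi_{(w)}$ and $\Psi_{(w)}$ are \emph{closed} subsets of $\Delta_+^{\re}$: if two roots of the same block have a sum that is again a root, then (testing with $w^{-1}$) that sum lies in the same block. By the commutator relations among real root groups, closedness forces all commutators of $U_{(w)}$-generators back into $U_{(w)}$, and likewise for $U^{(w)}$.

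With this dictionary I would argue by induction on $\ell(w)$. The case $w=1$ is trivial, since $\Phi_{(1)}=\emptyset$ and $U^{(1)}=U$, and the case of a simple reflection $w=w_i$ is the classical rank-one splitting $U=U_{\alpha_i}\,U_{\Delta_+^{\re}\setminus\{\alpha_i\}}$, which uses only that $w_i$ fixes $\Delta_+^{\re}\setminus\{\alpha_i\}$ setwise and sends $\alpha_i$ to $-\alpha_i$. For the inductive step, fix a reduced word $w=w_{i_1}w''$ with $\ell(w'')=\ell(w)-1$ and put $s=w_{i_1}$. Lemma~\ref{Invsets}(\ref{Invsets-decomp}) gives $\Phi_{(w)}=\{\alpha_{i_1}\}\sqcup s\,\Phi_{(w'')}$, so by Lemma~\ref{lemdec2} together with the ordering of Subsection~\ref{SS-inv} we obtain a unique ordered factorization $U_{(w)}=U_{\alpha_{i_1}}\cdot\widetilde{s}\,U_{(w'')}\,\widetilde{s}^{-1}$. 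Dually one finds $\Psi_{(w)}=s\big(\Psi_{(w'')}\setminus\{\alpha_{i_1}\}\big)$, so conjugating the inductive hypothesis $U=U^{(w'')}U_{(w'')}$ by $\widetilde{s}$ and intersecting with $U$ expresses $U^{(w)}$ through $\widetilde{s}\,U^{(w'')}\,\widetilde{s}^{-1}$; assembling these two descriptions with the rank-one splitting for $s$ yields $U=U^{(w)}U_{(w)}$.

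The crux is the reordering inside the inductive step: after conjugating by $\widetilde{s}$ one produces generators indexed by a mixture of $\{\alpha_{i_1}\}$, $s\Phi_{(w'')}$, and $s(\Psi_{(w'')}\setminus\{\alpha_{i_1}\})$, and these must be shuffled into the two prescribed blocks, with $\Psi_{(w)}$ on the left and $\Phi_{(w)}$ on the right, without spawning root groups outside $\Delta_+^{\re}$. This is exactly what closedness of the two blocks, together with the commutator relations, guarantees, while Lemma~\ref{lemdec2} supplies uniqueness on the $U_{(w)}$ side and hence bijectivity of the product map. A secondary subtlety, present only when $A$ is not of finite type, is that $U$ and $U^{(w)}$ are infinitely generated, so the identification $U^{(w)}=U_{\Psi_{(w)}}$ — that the intersection $U\cap w^{-1}Uw$ is generated by, and not merely contains, the root groups $U_\gamma$ with $\gamma\in\Psi_{(w)}$ — must be handled inside the pro-unipotent structure of $U$; the length induction conveniently sidesteps this by only ever manipulating the finite inversion set $\Phi_{(w)}$.
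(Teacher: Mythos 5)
The paper never proves this lemma: it is imported verbatim from Subsection~4.2.6 of \cite{PP}, so your argument has to stand entirely on its own. Its overall skeleton (reduce to one factorization by taking inverses, induct on $\ell(w)$ using $\Phi_{(w)}=\{\alpha_{i_1}\}\sqcup w_{i_1}\Phi_{(w'')}$ and the rank-one splitting) is the right shape, and the root-set bookkeeping, including $\Psi_{(w)}=w_{i_1}\bigl(\Psi_{(w'')}\setminus\{\alpha_{i_1}\}\bigr)$, is correct. But the two load-bearing claims fail outside finite type. First, the shuffling mechanism: the commutator relation $[\chi_\alpha(s),\chi_\beta(t)]=\prod\chi_{i\alpha+j\beta}(\cdots)$ exists only for \emph{prenilpotent} pairs of real roots, and two positive real roots need not form one. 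Already $\{\alpha_1,\alpha_2\}$ in any rank-2 non-finite type is not prenilpotent (no $w'$ sends both simple roots to negative roots, since inversion sets in rank 2 are initial segments of $\Omega_1$ or of $\Omega_2$), and indeed $\langle U_{\alpha_1},U_{\alpha_2}\rangle$ sits inside the free product $U=U_1\ast U_2$ that the paper itself invokes in Subsection~\ref{SS-rank2}. So for $w=w_1$, with $\alpha_1\in\Phi_{(w)}$ and $\alpha_2\in\Psi_{(w)}$, there is simply no relation available to move $\chi_{\alpha_2}(t)$ past $\chi_{\alpha_1}(s)$; closedness of the two blocks is vacuous because the obstruction is not a root lying in the wrong block but the absence of any commutation formula at all.

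Second, and fatally for the strategy, the identification $U^{(w)}=U_{\Psi_{(w)}}$ is false in general, not a ``secondary subtlety.'' Take $w=w_1$ in rank 2 symmetric type, so $U=U_1\ast U_2$ with $U_1=\bigoplus_{\gamma\in\Omega_1}U_\gamma$ abelian. Granting the lemma, $\chi_{\alpha_1}(s)\chi_{\alpha_2}(t)$ factors as $h\,\chi_{\alpha_1}(r)$ with $h=\chi_{\alpha_1}(s)\chi_{\alpha_2}(t)\chi_{\alpha_1}(-r)\in U^{(w_1)}$. But the subgroup $U_{\Psi_{(w_1)}}=\langle U_\gamma\mid\gamma\in\Delta_+^{\re}\setminus\{\alpha_1\}\rangle$ equals $U_{\Omega_1\setminus\{\alpha_1\}}\ast U_2$ inside $U_1\ast U_2$, so every element of it has all its $U_1$-syllables in $U_{\Omega_1\setminus\{\alpha_1\}}$; the normal form of $h$ begins with the syllable $\chi_{\alpha_1}(s)\notin U_{\Omega_1\setminus\{\alpha_1\}}$, so $h\notin U_{\Psi_{(w_1)}}$. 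Hence $U\cap w^{-1}Uw$ is strictly larger than the group generated by the root groups it contains, and what you have sketched is a proof of the false statement $U=U_{\Psi_{(w)}}U_{\Phi_{(w)}}$. A correct proof must treat $U^{(w)}$ as an honest intersection: the standard route (the one in \cite{PP}, and in Kumar's and R\'emy's treatments) works in the pro-unipotent completion $\widehat{U}=\exp(\widehat{\mathfrak{n}}^+)$, where $\bigoplus_{\gamma\in\Delta_+\setminus\{\alpha_i\}}\mathfrak{g}_\gamma$ is an ideal of $\mathfrak{n}^+$, yielding $\widehat{U}=U_{\alpha_i}\ltimes(\widehat{U}\cap \widetilde{w}_i\widehat{U}\widetilde{w}_i^{-1})$, and then intersects with the minimal $U$ before running your length induction. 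Your argument as written is essentially the finite-type proof, where every pair of positive roots is prenilpotent and both factors really are generated by root groups.
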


\section{Integrality of inversion subgroups}\label{S-proof}

To prove Theorem~\ref{T-main}, we need a preliminary result, which we discuss next.
Define
$$x_{\b_1,\b_2,\dots,\b_k}^{[i_1,i_2,\dots,i_k]} := x_{\b_1}^{[i_1]} x_{\b_2}^{[i_2]} \cdots x_{\b_k}^{[i_k]}.$$
 
%
 The proof of the following is similar to Lemma~\ref{hwt20}:
\begin{lemma}\label{hwt2}
Let $\beta_{i}\in \Delta^{\re}_{+}$ and $t_{i}\in\mathbb{Q}$ for $i=1,\dots,k$.
If $\mu$ is a weight of $V$ and $v\in V_{\mu}$, then
\[
\mathlarger{\prod}_{i=1}^{k}\chi_{\beta_{i}}(t_{i})\cdot v=\mathlarger{\mathlarger{\sum}}_{i_{1},i_{2},\dots, i_{k}\in \mathbb{Z}_{\ge 0}}  t_{1}^{i_{1}}t_{2}^{i_{2}}\cdots t_{k}^{i_{k}} \left( x_{\b_1,\b_2,\dots,\b_k}^{[i_1,i_2,\dots,i_k]} \cdot v\right)
\]
where $x_{\b_1,\b_2,\dots,\b_k}^{[i_1,i_2,\dots,i_k]} \cdot v \in V_{\mu+i_{1}\beta_{1}+i_{2}\beta_{2}+i_{k}\beta_{k}}$ and only finitely of the terms $x_{\b_1,\b_2,\dots,\b_k}^{[i_1,i_2,\dots,i_k]} \cdot v$ are non-zero.
If moreover $v\in V_{\mu,\mathbb{Z}}$, then $x_{\b_1,\b_2,\dots,\b_k}^{[i_1,i_2,\dots,i_k]} \cdot v \in V_{\mu+i_{1}\beta_{1}+i_{2}\beta_{2}+i_{k}\beta_{k},\mathbb{Z}}$.
\end{lemma}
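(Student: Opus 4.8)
The plan is to prove Lemma~\ref{hwt2} by induction on $k$, the number of factors, using Lemma~\ref{hwt20} as the base case. The statement is essentially a multivariable version of the single-root expansion, so the natural strategy is to peel off one factor at a time and invoke the inductive hypothesis on the remaining product. First I would set $k=1$: here the claim is exactly Lemma~\ref{hwt20}, which gives $\chi_{\beta_1}(t_1)\cdot v = \sum_{i_1} t_1^{i_1}(x_{\beta_1}^{[i_1]}\cdot v)$ with $x_{\beta_1}^{[i_1]}\cdot v\in V_{\mu+i_1\beta_1}$, the finiteness of the sum, and (when $v\in V_{\mu,\Z}$) the integrality $x_{\beta_1}^{[i_1]}\cdot v\in V_{\mu+i_1\beta_1,\Z}$.

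For the inductive step, I would write $\prod_{i=1}^k \chi_{\beta_i}(t_i) = \chi_{\beta_1}(t_1)\cdot\bigl(\prod_{i=2}^k\chi_{\beta_i}(t_i)\bigr)$. Applying the inductive hypothesis to the product of the last $k-1$ factors acting on $v$, I obtain a finite sum $\sum_{i_2,\dots,i_k} t_2^{i_2}\cdots t_k^{i_k}\,(x_{\beta_2,\dots,\beta_k}^{[i_2,\dots,i_k]}\cdot v)$, where each summand $x_{\beta_2,\dots,\beta_k}^{[i_2,\dots,i_k]}\cdot v$ lies in the weight space $V_{\mu+i_2\beta_2+\cdots+i_k\beta_k}$. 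I would then apply Lemma~\ref{hwt20} to $\chi_{\beta_1}(t_1)$ acting on each such weight vector $w':=x_{\beta_2,\dots,\beta_k}^{[i_2,\dots,i_k]}\cdot v\in V_{\mu'}$, with $\mu'=\mu+i_2\beta_2+\cdots+i_k\beta_k$, yielding $\chi_{\beta_1}(t_1)\cdot w' = \sum_{i_1} t_1^{i_1}(x_{\beta_1}^{[i_1]}\cdot w')$ with $x_{\beta_1}^{[i_1]}\cdot w'\in V_{\mu'+i_1\beta_1}$. Substituting and collecting powers of $t_1$ produces exactly the claimed expansion $\sum_{i_1,\dots,i_k} t_1^{i_1}\cdots t_k^{i_k}\,(x_{\beta_1,\dots,\beta_k}^{[i_1,\dots,i_k]}\cdot v)$, since $x_{\beta_1}^{[i_1]}(x_{\beta_2,\dots,\beta_k}^{[i_2,\dots,i_k]}\cdot v)=x_{\beta_1,\dots,\beta_k}^{[i_1,\dots,i_k]}\cdot v$ by the very definition of the bracket notation, and the resulting weight is $\mu+i_1\beta_1+\cdots+i_k\beta_k$ as required.

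The two secondary assertions then follow mechanically. Finiteness: the inductive hypothesis gives finitely many nonzero inner summands, and for each the base-case Lemma~\ref{hwt20} contributes only finitely many nonzero terms in $i_1$ (because $x_{\beta_1}$ acts locally nilpotently, $V$ being integrable), so the double sum has finite support. Integrality: if $v\in V_{\mu,\Z}$, the inductive hypothesis places each $x_{\beta_2,\dots,\beta_k}^{[i_2,\dots,i_k]}\cdot v$ in $V_{\mu',\Z}$, and then the integral part of Lemma~\ref{hwt20} (which rests on Lemma~\ref{L-wact}(\ref{L-wact-onx})) places each $x_{\beta_1}^{[i_1]}\cdot w'$ in $V_{\mu'+i_1\beta_1,\Z}=V_{\mu+i_1\beta_1+\cdots+i_k\beta_k,\Z}$.

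I do not expect a genuine obstacle here, as the lemma is a routine bookkeeping generalization of Lemma~\ref{hwt20}; the only point requiring minor care is the reindexing when interchanging the order of summation and collecting the $t_1^{i_1}$ coefficients, and confirming that the finite-support property of the nested sums survives so that all formal manipulations are legitimate and no convergence issue arises. The associativity used to write $x_{\beta_1}^{[i_1]}\bigl(x_{\beta_2,\dots,\beta_k}^{[i_2,\dots,i_k]}\cdot v\bigr)$ as a single divided-power monomial acting on $v$ is immediate from the definition of $x_{\b_1,\dots,\b_k}^{[i_1,\dots,i_k]}$, so no commutation relations among the $x_{\beta_i}$ are needed.
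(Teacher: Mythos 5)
Your induction on $k$, peeling off $\chi_{\beta_1}(t_1)$ and applying Lemma~\ref{hwt20} to each weight-homogeneous term produced by the inductive hypothesis, is correct and is exactly the argument the paper intends (the paper gives no written proof, remarking only that it is ``similar to Lemma~\ref{hwt20}''). All the points needing care --- the associativity $x_{\beta_1}^{[i_1]}\bigl(x_{\beta_2,\dots,\beta_k}^{[i_2,\dots,i_k]}\cdot v\bigr)=x_{\beta_1,\dots,\beta_k}^{[i_1,\dots,i_k]}\cdot v$, the finite support of the nested sums, and the integrality via Lemma~\ref{L-wact}(\ref{L-wact-onx}) --- are handled properly.
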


\begin{proposition}\label{P-main}
Let $w\in W$ and
$u_{(w)}\in U_{(w)}$. If $u_{(w)}  \widetilde{w}\cdot  v_{\lambda}\in V_{\mathbb{Z}}$, then $u_{(w)}\in U_{(w)}({\mathbb{Z}})$.
\end{proposition}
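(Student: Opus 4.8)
The plan is to induct on $\ell(w)$, peeling off the \emph{first} simple reflection so that the newly exposed root is simple, and thereby reducing to a single rank-one computation of the type in Proposition~\ref{P-onetermstrong}. When $\ell(w)=0$ there is nothing to prove. Otherwise write $w=w_{i_1}w''$ with $w''=w_{i_2}\cdots w_{i_k}$ reduced of length $\ell(w)-1$, so that $\widetilde{w}=\widetilde{w}_{i_1}\widetilde{w}''$. By Lemma~\ref{Invsets}(\ref{Invsets-decomp}) we have $\Phi_{(w)}=\{\alpha_{i_1}\}\sqcup w_{i_1}\Phi_{(w'')}$, and the Papi ordering on $\Phi_{(w)}$ begins with $\alpha_{i_1}$ followed by $w_{i_1}$ applied to the Papi ordering of $\Phi_{(w'')}$. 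Using the unique factorization of Lemma~\ref{lemdec2} together with the conjugation formula $\chi_{w_{i_1}\gamma}(s)=\widetilde{w}_{i_1}\chi_\gamma(\pm s)\widetilde{w}_{i_1}^{-1}$, I would write $u_{(w)}=\chi_{\alpha_{i_1}}(t_1)\,\widetilde{w}_{i_1}u_{(w'')}\widetilde{w}_{i_1}^{-1}$ for a unique $u_{(w'')}\in U_{(w'')}$, so that
$$u_{(w)}\widetilde{w}\cdot v_\lambda=\chi_{\alpha_{i_1}}(t_1)\,\widetilde{w}_{i_1}\bigl(u_{(w'')}\widetilde{w}''\cdot v_\lambda\bigr)=\chi_{\alpha_{i_1}}(t_1)\cdot\psi,$$
where $\psi:=\widetilde{w}_{i_1}\bigl(u_{(w'')}\widetilde{w}''\cdot v_\lambda\bigr)$. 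I then need to extract two facts from $\chi_{\alpha_{i_1}}(t_1)\psi\in V_\Z$: that $t_1\in\Z$, and that $\psi\in V_\Z$.

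For the weight bookkeeping I would note, via Lemma~\ref{hwt2} and Lemma~\ref{L-wact}(a), that $u_{(w'')}\widetilde{w}''\cdot v_\lambda=v_{w''\lambda}+(\text{higher terms})$, each nonleading term having weight $w''\lambda+\sigma'$ for a nonzero nonnegative integer combination $\sigma'$ of roots in $\Phi_{(w'')}$. Applying $\widetilde{w}_{i_1}$ turns this into $\psi=v_{w\lambda}+(\text{terms of weight }w\lambda+\sigma)$, where $\sigma$ ranges over nonzero nonnegative combinations of $S:=w_{i_1}\Phi_{(w'')}\subseteq\Phi_{(w)}\setminus\{\alpha_{i_1}\}$, a set of positive roots none of which equals $\alpha_{i_1}$.

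To isolate $t_1$, set $N:=-\langle w\lambda,\alpha_{i_1}^\vee\rangle$; this is positive since $\alpha_{i_1}\in\Phi_{(w)}$ forces $w^{-1}\alpha_{i_1}<0$ and $\lambda$ is dominant regular, and one checks $w\lambda+N\alpha_{i_1}=w_{i_1}w\lambda=w''\lambda$. I would then examine the $V_{w''\lambda}$-component of $\chi_{\alpha_{i_1}}(t_1)\psi$. The crux of the whole argument is that this component can only receive a contribution from the leading term $v_{w\lambda}$ of $\psi$: reaching weight $w\lambda+N\alpha_{i_1}$ from a nonleading term $w\lambda+\sigma$ by adding $m\alpha_{i_1}$ would force $\sigma=(N-m)\alpha_{i_1}$, i.e.\ a nonzero nonnegative combination of positive roots different from $\alpha_{i_1}$ equal to a nonnegative multiple of the \emph{simple} root $\alpha_{i_1}$; this is impossible by height/atomicity of simple roots. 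Hence the $V_{w''\lambda}$-component equals $t_1^{N}\,x_{\alpha_{i_1}}^{[N]}\cdot v_{w\lambda}$. Since $w''\lambda$ is extremal, $w''\lambda+\alpha_{i_1}$ is not a weight, so writing $v_{w\lambda}=\widetilde{w}_{i_1}\cdot v_{w''\lambda}=\pm x_{-\alpha_{i_1}}^{[N]}\cdot v_{w''\lambda}$ and invoking Lemma~\ref{L-xx} (with $\mu=w''\lambda$, $\alpha=\alpha_{i_1}$) gives $x_{\alpha_{i_1}}^{[N]}\cdot v_{w\lambda}=\pm v_{w''\lambda}$. Thus the component is $\pm t_1^{N}v_{w''\lambda}$; as $V_{w''\lambda,\Z}=\Z v_{w''\lambda}$ by Lemma~\ref{L-wact}(\ref{L-wact-onVlambda}) and $\chi_{\alpha_{i_1}}(t_1)\psi\in V_\Z$, we conclude $t_1^{N}\in\Z$, hence $t_1\in\Z$ since $\Z$ is integrally closed in $\Q$. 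It is essential here that the isolated coefficient is a \emph{unit} $\pm1$, which is exactly what Lemma~\ref{L-xx} supplies.

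Finally I would close the induction. With $t_1\in\Z$ we have $\chi_{\alpha_{i_1}}(t_1)\in G(\Z)\subseteq\Gamma(\Z)$, so $\psi=\chi_{\alpha_{i_1}}(-t_1)\bigl(\chi_{\alpha_{i_1}}(t_1)\psi\bigr)\in V_\Z$, and therefore $u_{(w'')}\widetilde{w}''\cdot v_\lambda=\widetilde{w}_{i_1}^{-1}\psi\in V_\Z$ because $\widetilde{w}_{i_1}^{-1}\in\Gamma(\Z)$. The induction hypothesis (Proposition~\ref{P-main} for $w''$) yields $u_{(w'')}\in U_{(w'')}(\Z)$. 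Conjugation by $\widetilde{w}_{i_1}$ sends each $\chi_\gamma(s)$ with $s\in\Z$ to $\chi_{w_{i_1}\gamma}(\pm s)\in U_{w_{i_1}\gamma}(\Z)$ with $w_{i_1}\gamma\in\Phi_{(w)}$, so $\widetilde{w}_{i_1}u_{(w'')}\widetilde{w}_{i_1}^{-1}\in U_{(w)}(\Z)$; combined with $\chi_{\alpha_{i_1}}(t_1)\in U_{\alpha_{i_1}}(\Z)\subseteq U_{(w)}(\Z)$ this gives $u_{(w)}\in U_{(w)}(\Z)$, completing the induction. The one genuinely delicate ingredient is the atomicity-of-simple-roots step pinning down $t_1$; everything else is bookkeeping with the triangular weight structure and the rank-one identities already available in Lemmas~\ref{L-xx} and~\ref{L-wact}.
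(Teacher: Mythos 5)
Your proof is correct and follows essentially the same route as the paper's: induct on $\ell(w)$ by peeling off the leading simple reflection, isolate the extremal $V_{w''\lambda}$-component to force $t_1^{N}\in\Z$ via Lemma~\ref{L-xx}, then conjugate the remaining factor through $\widetilde{w}_{i_1}$ and invoke the inductive hypothesis. The only cosmetic differences are that you rule out contributions from the cross terms by the support of the roots in $w_{i_1}\Phi_{(w'')}$ (the paper instead applies $(w'')^{-1}$ and $w_{\beta}$ to the weights), and that you pin the extremal coefficient down to a unit $\pm 1$ where the paper is content to observe that a product of two integers equals $t^{n}$.
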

Theorem~\ref{T-main} follows immediately from this result since $U_{(w)}(\Z)\subseteq U_{(w)}\cap G(\Z)$.

\begin{proof}
We proceed by induction on $k=\ell(w)$. The base case $\ell(w)=1$ is just Proposition~\ref{P-onetermstrong}
for a simple root $\a_i$.
So assume $\ell(w)\ge 2$ and the inductive hypothesis. 
Let $\Phi_{(w)}=\{\b_1, \dots \b_k\}$ in the usual order.
So $\b:=\b_1$ is a simple root and $w=w_{\b}w''$ with $\ell(w'')=k-1$,
and $\Phi_{(w'')} =\{w_\b \b_2,\dots, w_\b \b_k\}$ by Lemma~\ref{Invsets}(\ref{Invsets-decomp}).
In particular, $w_\b\b_2,\dots,w_\b \b_k$ are positive roots.
Write
$$u_{(w)} = \prod_{i=1}^k \chi_{\b_i}(t_i) = \chi_\b(t) \prod_{i=2}^k \chi_{\b_i}(t_i),$$
for $t=t_1,t_2,\dots,t_k\in\Q$.

Write $\mu=w''\lambda$ and $v_\mu=\widetilde{w}''\cdot v_\lambda$.
Let $n=  \langle \mu,\b^\vee\rangle$, so that $w\lambda=w_{\b}\mu =\mu-n\b$.
Let $v_{\mu-n\b}=\widetilde{w}_\b\cdot v_\mu = \widetilde{w}\cdot v_\lambda$.
Then
\begin{align*}
u_{(w)}\widetilde{w} \cdot v_\lambda 
&=u_{(w)} \cdot v_{\mu-n\b}
=  \mathlarger{\mathlarger{\sum}}_{\substack{{i_{1},i_{2},\dots, i_k}
\in \mathbb{Z}_{\ge 0}}}  t_1^{i_1}t_2^{i_2}\cdots t_k^{i_k} 
\left( x^{[i_2,\dots,i_k]}_{\b_2,\dots, \b_k}  \cdot v_{\mu-n\b} \right)\\
   &= t^n x_{\b}^{[n]} \cdot v_{\mu-n\b} +
\mathlarger{\mathlarger{\sum}}_{\substack{{i\in\Z_{\le n},\;i_{2},\dots, i_{k}\in \mathbb{Z}_{\ge 0}}\\\text{at least one non-zero}}}  
 t^{n-i} t_2^{i_2}\cdots t_k^{i_k}  \left( x^{[n-i,i_2,\dots,i_k]}_{\b,\b_2,\dots, \b_k}  \cdot v_{\mu-n\b} \right),
\end{align*}
where we have substituted $i=n-i_1$ and extracted the term with $i,i_2,\dots,i_k=0$ from the sum.
Now $t^n x_{\b}^{[n]} \cdot v_{\mu-n\b}\in V_\mu$ and $x^{[n-i,i_2,\dots,i_k]}_{\b,\b_2,\dots, \b_k}  \cdot v_{\mu-n\b}\in 
V_\nu$ for $\nu=\mu-i\b+\sum_{j=2}^k i_j\b_j$.
Now 
\begin{align*}
  (w'')^{-1}(\nu) &= \lambda - (w'')^{-1}(i\b) +(w'')^{-1}\left(\sum_{j=2}^k i_j\b_j\right)
\ge \lambda - i(w'')^{-1}(\b),
\end{align*}
but $(w'')^{-1}(\b) > 0$, so we must have $i\ge 0$ for $\nu$ to be a weight of $V$.
So we can take $i$ to be in the range $0\le i\le n$.
If $\mu=\nu$, then $i\b=\sum_{j=2}^k i_j\b_j$,
but $w_\b\left( i_j\b_j\right)\ge0$ and $w_\b(i\b)\le0$, so  $i,i_1,\dots, i_k=0$.
Hence the $V_{\mu,\Z}$ component of $u_{(w)}\widetilde{w}\cdot v_\lambda\in V_\Z$ is just
$ t^n x_{\b}^{[n]} \cdot v_{\mu-n\b}$.

Now $V_{\mu,\Z}=\Z v_\mu$, so
$t^n x_{\b}^{[n]} \cdot v_{\mu-n\b}=m v_{\mu}$
for some integer $m$.
Also 
$
 x_{-\b}^{[n]} \cdot v_\mu\in V_{\mu-n\b,\Z} =
\Z v_{\mu-n\b},
$
so 
$x_{-\b}^{[n]}\cdot  v_{\mu}=m'v_{\mu-n\b}$ for some integer $m'$.
Hence
$$
mm'v_{\mu-n\b} =   m x_{-\b}^{[n]}\cdot v_{\mu} =
t^n x_{-\b}^{[n]} x_{\b}^{[n]} \cdot v_{\mu-n\b} = t^n v_{\mu-n\b}
$$
by Lemma~\ref{L-xx} for $\a=-\b$, since $\mu-n\b-\b\notin\wts(V)$ as 
$w^{-1}(\mu-n\b-\b) = w^{-1}(w\lambda -\b)= \lambda - w^{-1} \b >\lambda$.
Hence  $t^n=mm'$ is an integer, so $t$ is an integer.

Now 
$$
u_{(w)}\widetilde{w}=\chi_\b(t) \prod_{i=2}^k \chi_{\b_i}(t_i) \,\widetilde{w}_\b \widetilde{w}''
=\chi_\b(t)\widetilde{w}_\b u_{(w'')} \widetilde{w}'',
$$
where
$ u_{(w'')}= \prod_{i=2}^k \chi_{w_\b \b_i}(t_i)\in U_{(w'')}$.
Now  $\chi_\b(t)  \widetilde{w}_\b\in G(\Z)$, so
$$u_{(w'')} \widetilde{w}''\cdot v_\lambda =\left( \chi_\b(t)  \widetilde{w}_\b\right)^{-1}\cdot \left(u_{(w)} \widetilde{w}\cdot v_\lambda\right) \in V_\Z,$$
and so $u_{(w'')}\in U_{(w'')}(\Z)$ by induction. Hence $u_{(w)}= \chi_\b(t)\widetilde{w}_\b u_{(w'')} \widetilde{w}_\b^{-1}\in U_{(w)}(\Z)$.
\end{proof}

\section{Further integrality results}

\subsection{Integrality of finite dimensional unipotent groups}\label{SS0-findim}

For this section, we assume that $A$ has finite type. In this case, $A$ is a Cartan matrix and $\frak g$ is a finite dimensional semi-simple Lie algebra over $\Q$. The group construction in Section~\ref{S-KMring} can be carried out with the same external data: a highest weight representation $V$ with highest weight $\lambda$, a $\Z$-form of the universal enveloping algebra (constructed by Cartier and Kostant, see \cite{Kostant1966}) and a lattice $V_{\Z}$ in $V$ (called an {\it admissible lattice} in \cite{Chev1995}).

The group $G(\Q)$ is then a semi-simple algebraic group and $G(\Q)\cong G_\Z(\Q)$, where $G_\Z(\Q)$ is Chevalley's group  scheme, now known as the Chevalley--Demazure group scheme (\cite{Chev2005}).

Our construction of the group $G(\Q)$ does not explicitly involve a lattice $L$ between the root lattice and the weight lattice, as in Chevalley's construction. With the assumption that the set of weights of $V$ contains all the fundamental weights, our group $G(\Q)$ coincides with Chevalley's simply connected group $ G_\Z(\Q)$.

As a corollary of Theorem~\ref{T-main}, we have the following.

 \begin{cor}\label{findimcor}
Suppose that the matrix $A$ has finite type. Suppose also that the set of weights of $V$ contains all the fundamental weights. Then  the  unipotent subgroup $U$ of $G(\Q)$ is integral.
\end{cor}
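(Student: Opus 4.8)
The plan is to deduce Corollary~\ref{findimcor} from Theorem~\ref{T-main} by covering the whole unipotent subgroup $U$ with finitely many inversion subgroups and exploiting the finiteness of $\Delta_+^{\re}$ in the finite-type case. First I would observe that when $A$ has finite type, $W$ is a finite group with a longest element $w_0$, and every positive root is real, so $\Delta_+ = \Delta_+^{\re}$ is finite. The key structural fact is that $\Phi_{(w_0)} = \Delta_+$: since $w_0$ sends every positive root to a negative root, its inversion set is all of $\Delta_+$. Consequently $U_{(w_0)} = U_{\Delta_+^{\re}} = U$, so the full unipotent group is itself an inversion subgroup. This is the crucial simplification that is unavailable in the infinite-dimensional setting, where no single $w$ has inversion set equal to all of $\Delta_+$.

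Given this identification, I would apply Theorem~\ref{T-main} directly with $w = w_0$: it asserts that $U = U_{(w_0)}$ is strongly integral with respect to $\widetilde{w_0}\cdot v_\lambda$. By Lemma~\ref{Strong}, strong integrality implies integrality, so $U$ is integral, which is exactly the assertion of the corollary. The hypothesis that the weights of $V$ contain all fundamental weights enters to guarantee that $G(\Q)$ coincides with the simply connected Chevalley group $G_\Z(\Q)$, as discussed in the text preceding the corollary; this is what makes the integrality statement meaningful in terms of the classical group scheme, and it also ensures regularity of $\lambda$ is available so that the earlier results apply.

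The step I would expect to require the most care is verifying that $\Phi_{(w_0)} = \Delta_+$ cleanly yields $U_{(w_0)} = U$ as groups, not merely that they have the same generating root groups. Since $U = U_{\Delta_+^{\re}} = \langle U_\a \mid \a \in \Delta_+^{\re}\rangle$ by definition in Subsection~\ref{generators}, and $U_{(w_0)} = \langle U_\a \mid \a \in \Phi_{(w_0)}\rangle$ by definition in Subsection~\ref{invgrp}, the equality of the indexing sets $\Phi_{(w_0)} = \Delta_+ = \Delta_+^{\re}$ gives equality of the generated subgroups immediately. The only genuine content is the standard fact that $\ell(w_0) = |\Delta_+|$ and $w_0(\Delta_+) = \Delta_-$, which is classical for finite Weyl groups; by Lemma~\ref{Invsets}(b) the cardinality of $\Phi_{(w_0)}$ equals $\ell(w_0) = |\Delta_+|$, and since $\Phi_{(w_0)} \subseteq \Delta_+$ a cardinality count forces equality. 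Thus the proof reduces to citing Theorem~\ref{T-main} at $w = w_0$ together with Lemma~\ref{Strong}, and the argument is short once the identification $U = U_{(w_0)}$ is in place.
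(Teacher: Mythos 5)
Your proof is correct and follows essentially the same route as the paper: identify $U$ with the inversion subgroup $U_{(w_0)}$ of the longest element $w_0$ (since $\Phi_{(w_0)}=\Delta_+$ in finite type), then apply Theorem~\ref{T-main} and Lemma~\ref{Strong}. The extra care you take in verifying $U_{(w_0)}=U$ via the cardinality count is a fine elaboration but does not change the argument.
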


\begin{proof}  Since $A$ has finite type, the Weyl group $W$ is finite. Moreover, $W$ contains the longest element, denoted $w_{0}$, which flips all positive roots to negative roots. That is $\Phi_{w_{0}}=\Delta^{+}$, which gives $U_{w_{0}}=U$. Combining this with Theorem~\ref{T-main} and Lemma~\ref{Strong},
it follows that $U$ is integral.
\end{proof}

\subsection{Integrality of groups generated by commutating real root subgroups}\label{SS-abel}
We can now prove the result from Section~\ref{S-intro} that gives integrality of subgroups of $U$ generated by commuting real root groups.

\begin{abeltheorem}
Let $U$ be the positive unipotent subgroup of $G=G^\lambda(\Q)$. 
If $M$ is a subgroup of $U$ generated by commuting real root subgroups, then
$M$ is integral.
\end{abeltheorem}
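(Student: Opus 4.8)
The plan is to reduce to a finitely generated $M$ and then to induct on the number of generators, peeling off one root group at a time, where at each stage the root to be peeled is chosen by a convexity criterion so that a single weight--space component already detects integrality. First I would observe that, since the generating root groups commute, $M$ is abelian and every $g\in M$ is a finite product $g=\prod_{\a\in\Omega'}\chi_\a(t_\a)$ supported on a finite commuting subset $\Omega'\subseteq\Delta_+^{\re}$. Because integrality is checked one element at a time and $G(\Z)\subseteq\Gamma(\Z)$, it suffices to prove the following finite statement: for every finite set $\Omega=\{\a_1,\dots,\a_k\}$ of pairwise commuting positive real roots and every $g=\prod_{i=1}^k\chi_{\a_i}(t_i)\in\Gamma(\Z)$, all $t_i\in\Z$. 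I would prove this by induction on $k$, the base case $k=1$ being exactly Proposition~\ref{P-onetermstrong} and Corollary~\ref{C-oneterm}.

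For the inductive step the crucial point is the choice of root to remove. The simple roots are linearly independent, so $\sum_i\R_{\ge0}\a_i$ is pointed, and hence so is its subcone $\operatorname{cone}(\Omega)$; I would therefore pick $\b:=\a_j\in\Omega$ spanning an extreme ray, i.e.\ with $\b\notin\operatorname{cone}(\Omega\setminus\{\b\})$ (such a $\b$ exists by pointedness). Then I act by $g$ on $v_0:=\widetilde{w}_\b\cdot v_\lambda$, which lies in $V_{\lambda-n\b,\Z}=\Z v_0$ with $n=\langle\lambda,\b^\vee\rangle>0$ by regularity of $\lambda$ (exactly as in Proposition~\ref{P-onetermstrong}). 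Expanding the action by Lemma~\ref{hwt2}, the terms landing in the weight space $V_\lambda$ are indexed by multi-indices $(m_1,\dots,m_k)$ with $m_i\ge0$ and $\sum_i m_i\a_i=n\b$. The extreme-ray choice together with pointedness forces every such solution to be the diagonal one $m_j=n$, $m_i=0\ (i\ne j)$: writing $\sum_{i\ne j}m_i\a_i=(n-m_j)\b$, a non-negative multiple of $\b$ lying in $\operatorname{cone}(\Omega\setminus\{\b\})$ must be $0$. Consequently the $V_\lambda$-component of $g\cdot v_0$ is precisely $t_j^{\,n}\,x_\b^{[n]}\cdot v_0=\tfrac{t_j^{\,n}}{m}\,v_\lambda$, where $x_{-\b}^{[n]}\cdot v_\lambda=m\,v_0$. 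Since $g\cdot v_0\in V_\Z$ and $V_{\lambda,\Z}=\Z v_\lambda$, I conclude $t_j^{\,n}\in\Z$, hence $t_j\in\Z$.

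To close the induction I would then use $\chi_\b(t_j)\in G(\Z)\subseteq\Gamma(\Z)$, so that $\chi_\b(t_j)^{-1}g=\prod_{i\ne j}\chi_{\a_i}(t_i)\in\Gamma(\Z)$ is a product supported on the smaller commuting set $\Omega\setminus\{\b\}$; the inductive hypothesis gives $t_i\in\Z$ for all $i\ne j$, whence $g\in G(\Z)$. This yields $M\cap\Gamma(\Z)\subseteq M\cap G(\Z)$, i.e.\ integrality of $M$.

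The main obstacle is exactly the vanishing of the cross terms, that is, showing that the diagonal multi-index is the only contribution to weight $\lambda$. This is where commutativity is indispensable: it guarantees that $M$ contains no root groups beyond those indexed by $\Omega$ (no commutators introduce roots of the form $i\a+j\b$), so all the weight bookkeeping stays inside $\operatorname{cone}(\Omega)$ and the pointedness argument applies. The extremal selection is genuinely needed and cannot be replaced by merely invoking strong integrality of a single $U_\b$: for a non-extremal choice (for instance a ``middle'' root such as $\a_1+\delta$ among $\{\a_1,\a_1+\delta,\a_1+2\delta\}$ in an affine rank-$2$ system) surviving cross terms do occur, and indeed $\Omega$ need not be contained in any single inversion set $\Phi_{(w)}$, so Theorem~\ref{T-main} does not apply directly. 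The remaining care in writing the argument is purely the convex-geometric verification that an extreme generator exists and that it blocks all non-diagonal solutions.
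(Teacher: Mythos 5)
Your proof is correct, and it takes a genuinely different route from the paper's. The paper argues by minimal counterexample: given a shortest $u=\prod_i\chi_{\beta_i}(t_{\beta_i})\in M$ with $u\cdot V_\Z=V_\Z$ but $u\notin U(\Z)$, it chooses (via Lemma~\ref{invdec}) some $w\in W$ with $\Omega\cap\Phi_{(w)}\neq\emptyset$, uses commutativity to factor $u=u_{(w)}u^{(w)}$ with $u^{(w)}$ supported off $\Phi_{(w)}$, observes that $\widetilde{w}^{-1}u^{(w)}\widetilde{w}\in U$ fixes $v_\lambda$ (Lemma~\ref{hwtstab}), and then invokes the full strength of Theorem~\ref{T-main} to get $u_{(w)}\in U_{(w)}(\Z)$ before recursing on the strictly shorter $u^{(w)}$. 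You instead induct on the number of factors and never use Theorem~\ref{T-main} or inversion sets: your extremal-ray choice of $\b$ in the pointed cone $\operatorname{cone}(\Omega)$ forces the only multi-index in the expansion of Lemma~\ref{hwt2} landing in $V_\lambda$ to be the diagonal one, so everything collapses to the rank-one computation of Lemma~\ref{L-xx} exactly as in Proposition~\ref{P-onetermstrong}. Your remarks that a non-extremal $\b$ does admit surviving cross terms, and that $\Omega$ need not lie in any single $\Phi_{(w)}$, are both correct and explain why the paper peels off $\Omega\cap\Phi_{(w)}$ as a block rather than one root at a time. What the paper's route buys is reuse of its main theorem with no convex-geometric input; what yours buys is self-containedness, since only the strong integrality of a single real root group is needed. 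The one step you should write out explicitly is the existence of the extremal generator: it holds because $\operatorname{cone}(\Omega)$ sits inside the pointed simplicial cone spanned by the simple roots and because distinct real roots are never proportional, so a generator lying on an extreme ray of $\operatorname{cone}(\Omega)$ cannot be a non-negative combination of the others; with that in place your case analysis on the sign of $n-m_j$ does force the diagonal solution, and the rest of the induction (peeling off $\chi_\b(t_j)\in G(\Z)\subseteq\Gamma(\Z)$) goes through.
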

\begin{proof}
Assume, for the sake of contradiction, that
\begin{eqnarray*}\label{cusp1a}
u=\chi_{\beta_{1}}(t_{\beta_{1}})\chi_{\beta_{2}}(t_{\beta_{2}})\cdots \chi_{\beta_{N}}(t_{\beta_{N}})\in M
\end{eqnarray*} 
is the shortest word such that 
$u\cdot V_{\mathbb{Z}} = V_{\mathbb{Z}}$ but $u\notin U(\Z)$.
By Corollary~\ref{C-oneterm}, we must have $N>1$.
Now $\Omega:=\{\beta_{1},\beta_{2},\dots, \beta_{N}\}\subseteq \Delta^{\re}_{+}$ is a finite set of real roots whose corresponding root subgroups commute with each other.
By part (b) of Lemma~\ref{invdec}, there exists $w\in W$ such that $\Omega\cap\Phi_{(w)}\neq \emptyset$.
Using the commutating of the real root groups $U_{\beta_{j}}$, we can rearrange the product above as
 $$u=u_{(w)}u^{(w)}= u^{(w)}u_{(w)},$$
 where $u_{(w)} = \prod_{\beta\in\Omega\cap\Phi_{(w)}} \chi_\beta(t_\beta)$ and
$u^{(w)} = \prod_{\beta\in\Omega\setminus\Phi_{(w)}} \chi_\beta(t_\beta)$.

Now we have
\begin{eqnarray*}
u\widetilde{w}\cdot v_{\lambda}=u_{(w)}u^{(w)}\widetilde{w}\cdot v_{\lambda}
=u_{(w)}\widetilde{w}\cdot (\widetilde{w}^{-1}u^{(w)}\widetilde{w} \cdot v_{\lambda}).
\end{eqnarray*}  
But $\widetilde{w}^{-1}u^{(w)}\widetilde{w}\in U$, so 
$\widetilde{w}^{-1}u^{(w)}\widetilde{w} \cdot v_{\lambda}=v_\lambda$
by Lemma~\ref {hwtstab}. 
So  $u_{(w)}\widetilde{w}\cdot v_{\lambda}=u\widetilde{w}\cdot v_{\lambda}\in V_{\mathbb{Z}}$.
But $U_{(w)}$ is strongly integral with respect to $\widetilde{w}\cdot v_{\lambda} $
by Theorem~\ref{T-main}, so $u_{(w)}\in U (\mathbb{Z})$.

Hence ${u_{(w)}}^{-1}\in U (\mathbb{Z})$ and so
$$u^{(w)}\cdot V_{\mathbb{Z}} = u \cdot({u_{(w)}}^{-1} \cdot V_{\mathbb{Z}}) 
= u\cdot V_{\mathbb{Z}} = V_{\mathbb{Z}}.$$
Since $\Omega\cap\Phi_{(w)}$ is nonempty, $u^{(w)}= \prod_{\beta\in\Omega\setminus\Phi_{(w)}} \chi_\beta(t_\beta)$ is a shorter word than $u$ and so our assumption implies $u^{(w)}\in U (\mathbb{Z})$. 
But now
$u=u_{(w)}u^{(w)}\in U (\mathbb{Z})$, which is a contradiction.
\end{proof}

\subsection{Integrality of subgroups of $U$ in the rank $2$ Kac--Moody case}\label{SS-rank2}
We start with a method for  constructing  infinite-dimensional integral unipotent subgroups.
\begin{proposition}\label{P-addunion}
Let $\Omega=\bigcup_{n=1}^\infty \Omega_n\subseteq\Delta_+^{\re}$ where $\Omega_1\subseteq\Omega_2\subseteq\cdots$ and
assume that each $U_{\Omega_n}$ is integral.
Then  $U_{\Omega}=\langle U_\a \mid \a\in\Omega\rangle$ is integral. 
\end{proposition}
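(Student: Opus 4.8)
The plan is to reduce the infinite increasing union to a single stage $U_{\Omega_N}$ by exploiting the finiteness of group words, and then invoke the integrality hypothesis at that stage. First I would unwind the definition of integrality: since $G(\Z)\subseteq\Gamma(\Z)=\{g\mid g\cdot V_\Z=V_\Z\}$ always holds, it suffices to show that every $g\in U_\Omega$ with $g\cdot V_\Z=V_\Z$ already lies in $G(\Z)$.

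The key observation is that $U_\Omega=\bigcup_{n=1}^\infty U_{\Omega_n}$. Indeed, any $g\in U_\Omega=\langle U_\a\mid\a\in\Omega\rangle$ is, by definition of the generated subgroup, a finite product of elements $\chi_{\a_i}(t_i)^{\pm1}=\chi_{\a_i}(\pm t_i)$ with each $\a_i\in\Omega$. Only finitely many roots $\a_1,\dots,\a_m$ occur, and each lies in some $\Omega_{n_i}$; because the $\Omega_n$ are nested and increasing, setting $N=\max_i n_i$ gives $\a_1,\dots,\a_m\in\Omega_N$, and hence $g\in U_{\Omega_N}$. The reverse inclusion $U_{\Omega_n}\subseteq U_\Omega$ is immediate from $\Omega_n\subseteq\Omega$. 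With this identification the argument concludes at once: given $g\in U_\Omega$ with $g\cdot V_\Z=V_\Z$, choose $N$ as above so that $g\in U_{\Omega_N}\cap\Gamma(\Z)$; then $g\in U_{\Omega_N}\cap G(\Z)$ by the assumed integrality of $U_{\Omega_N}$, so $g\in G(\Z)$, which is precisely the integrality of $U_\Omega$.

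I do not anticipate a serious obstacle here: the whole argument rests on the elementary fact that a group word is finite, so the nested exhaustion $\Omega=\bigcup_n\Omega_n$ locates any fixed element at a finite stage. The only point requiring a moment's care is that inverses of generators remain in the same root group, since $\chi_\a(t)^{-1}=\chi_\a(-t)$; this is what guarantees that a word in $\Omega$ involving only the roots $\a_1,\dots,\a_m$ stays inside $U_{\Omega_N}$, and hence that $\bigcup_n U_{\Omega_n}$ is genuinely a subgroup equal to $U_\Omega$ rather than merely a union of subgroups.
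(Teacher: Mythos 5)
Your argument is correct and is essentially the paper's own proof: both reduce a given element of $U_\Omega$, via the finiteness of group words and the nestedness of the $\Omega_n$, to an element of some $U_{\Omega_N}$ and then apply the integrality hypothesis there. Your extra remark that $\chi_\a(t)^{-1}=\chi_\a(-t)$ keeps inverses inside the same root group is a small point the paper leaves implicit, but otherwise the two arguments coincide.
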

\begin{proof}
Take $u\in U_{\Omega}$ with
$u\cdot V_\Z= V_\Z$. Then
$u=\prod_{i=1}^N \chi_{\beta_i}(a_i)$
for some $\beta_i\in\Omega$, $a_i\in\Q$. But now the finite set of roots $\{\beta_1,\dots,\beta_N\}$ must be contained
in $\Omega_n$ for some $n$. 
Hence $u\in U_{\Omega_n}$, which is an integral subgroup. So $u\in G(\Z)$.
\end{proof}

One way to construct such sets is by taking a sequence of simple reflections $w_{i_1}, w_{i_2},\dots$ such that every initial subsequence gives a reduced word $w_{i_1}\cdots w_{i_n}\in W$. Then
each group $U_{(w_{i_1}\cdots w_{i_n})}$ is integral by Theorem~\ref{T-main}. Now take
\begin{align*}
\Omega &:= \bigcup_{n=1}^\infty \Phi_{(w_{i_1}\cdots w_{i_n})}
 = \{\alpha_{i_1},\; w_{i_1}\alpha_{i_2}, \;w_{i_1}w_{i_2}\alpha_{i_3},\;\dots,\; w_{i_1}w_{i_2}\cdots w_{i_{k-2}}w_{i_{k-1}}\alpha_{k},\;\dots\}.
 \end{align*}
Then the subgroup $U_{\Omega}$ is integral.

Now suppose that $\mathfrak{g}$ has rank 2. 
A detailed description of the structure of $\Delta^{\re}$ and $U$ in rank 2 can be found in~\cite{CKMS}. 
We define
\begin{align*}
 \Omega_1 :=  \{ \alpha _{1},\ w_{1}\alpha _{2},\ w_{1}w_{2} \alpha_{1},\;\dots\}\quad\text{and}\quad
 \Omega_2 :=  \{ \alpha _{2},\ w_{2}\alpha _{1},\ w_{2}w_{1} \alpha_{2},\;\dots\}.
\end{align*}
In fact, $\Omega_1$ (resp.\ $\Omega_2$) is the set of all positive real roots on the lower (resp.\ upper)
branches of the hyperbolas defined in~\cite{CKMS}.

In this case $U=U_{1}{\Large{*}}\,U_{2}$, where for $i=1,2$,  the groups $U_i:= U_{\Omega_i}$ are abelian if $A$ is symmetric, and nilpotent of class 2 if $A$ is not symmetric (see \cite{CKMS}). 

The following theorem follows immediately from Proposition~\ref{P-addunion}.
\begin{rk2theorem}
Suppose that $G(\Q)$ has rank 2 with $U=U_{1}{\Large{*}}\,U_{2}$, where $U_i= U_{\Omega_i}$.   Then the groups $U_i$ are integral.
\end{rk2theorem}


\bibliographystyle{amsalpha}
\bibliography{GPmath}{}

\end{document}